\documentclass[11pt]{amsart}
\usepackage{amsmath} 
\usepackage{amsthm}
\usepackage{amssymb} 
\usepackage{amsfonts}
\usepackage{graphicx}
\usepackage{color}
\usepackage[normalem]{ulem}
\usepackage{comment}

\usepackage{caption,subcaption,pinlabel}
\usepackage{graphics}

%\overfullrule 5mm
%% I checked that the text stays out of right margin on 03/26. Will check again when done. -MM

\newtheorem{theorem}{Theorem}[section]
\newtheorem{lemma}[theorem]{Lemma}
\newtheorem{proposition}[theorem]{Proposition}

\newtheorem{question}[theorem]{Question}

\newtheorem{corollary}[theorem]{Corollary}

\theoremstyle{definition}
\newtheorem{definition}[theorem]{Definition}
\newtheorem{example}[theorem]{Example}

\newcommand{\Z}{\mathbb{Z}}

\newcommand{\T}{\mathcal{T}}
\newcommand{\M}{\mathcal{M}}
\let\int\relax
\newcommand{\int}{\mathring}

\newcommand{\boundary}{\partial}
\newcommand{\into}{\hookrightarrow}

\usepackage{accents}

\DeclareMathSymbol{\wtilde}{\mathord}{largesymbols}{"65}

    \author[Peter Lambert-Cole and Maggie Miller]{Peter Lambert-Cole and Maggie Miller}
    \title[Trisections of $5$-manifolds]{Trisections of $5$-manifolds}
    
       \address{Georgia Institute of Technology, Atlanta, GA 30332, USA}\email{plc@math.gatech.edu}
    \address{Princeton University, Princeton, NJ 08540, USA}\email{maggiem@math.princeton.edu}

  \begin{document}

  \maketitle

  \begin{abstract}
Gay and Kirby introduced the notion of a trisection of a smooth 4-manifold, which is a decomposition of the 4-manifold into three elementary pieces.  Rubinstein and Tillmann later extended this idea to construct multisections of piecewise-linear (PL) manifolds in all dimensions.  Given a PL manifold $Y$ of dimension $n$, this is a decomposition of $Y$ into $\lfloor \frac{n}{2} \rfloor + 1$ PL submanifolds.  We show that every smooth, oriented, compact 5-manifold admits a smooth trisection compatible with any desired trisection of its boundary.  
  \end{abstract}

  \section{Introduction}
  In this paper, we study trisections of compact $5$-manifolds. First, we review the concept of a trisection of a (closed) $4$-manifold.
 \begin{definition}[{\cite{gaykirby}}]\label{tridef4}
A $(g;k_1,k_2,k_3)$ {\it trisection} of a smooth, oriented, closed 4-manifold $X$ is a triple $(X_1, X_2, X_3)$ such that
\begin{itemize}
\item $X=X_1\cup X_2\cup X_3$ and $X_i\cap X_j=\boundary X_i\cap\boundary X_j$ for each $i\neq j$,
\item Each $X_{i}\cong\natural_{k_i} S^1\times B^3$ is a 4-dimensional 1-handlebody,
\item Each double intersection $X_{i} \cap X_{j}$ is a 3-dimensional 1-handlebody, and
\item The triple intersection $\Sigma = X_1 \cap X_2 \cap X_3$ is a closed, oriented surface of genus-$g$.
\end{itemize}
\end{definition}

Moreover, every inclusion $X_i\into X$, $X_i\cap X_j\into X$, $\Sigma\into X$ is smooth.
Gay and Kirby proved that every closed, oriented smooth 4-manifold admits a trisection (which is unique up to a stabilization operation).  The genesis was their study of Morse 2-functions, although they also showed that it is possible to build a trisection from a handle decomposition (see the proof of Theorem~\ref{thrm:GK-tri}).  Subsequently, Rubinstein and Tillmann generalized these ideas and found nice decompositions of piecewise-linear manifolds in all dimensions~\cite{multisections}.  Given a PL manifold $Y$ of dimension $n$, this is a decomposition of $Y$ into $k = \lfloor \frac{n}{2} \rfloor +1$ PL submanifolds, each of which is an $n$-dimensional 1-handlebody.  The intersection of any $j$ pieces, for $j = 2,\dots,k$, must satisfy further restrictions on their topology.  When $n = 5$, the number of pieces is $\lfloor \frac{5}{2} \rfloor +1 = 3$ and so PL 5-manifolds admit trisections as well.

\begin{definition}\label{tridef5}
A {\emph{smooth trisection}} $\M=(Y_1, Y_2, Y_3)$ of a $5$-manifold $Y$ is a decomposition of $Y$ into three pieces $Y_1, Y_2, Y_3$ so that the following conditions hold:
\begin{itemize}
\item $Y=Y_1\cup Y_2\cup Y_3$, where $Y_i\cap Y_j=\boundary Y_i\cap\boundary Y_j$ for $i\neq j$.
\item Each $Y_i$ is smoothly embedded into $Y$.
\item For integers $k_1, k_2, k_3\ge 0$, we have $Y_i\cong\natural_{k_i} S^1\times B^4$.
\item Each $Y_i\cap Y_j$ is a $4$-manifold smoothly embedded into $Y$. Moreover, $Y_i\cap Y_j$ is a regular neighborhood of its $2$-skeleton.
\item The triple intersection $Y_i\cap Y_j\cap Y_k$ is a $3$-manifold smoothly and properly embedded in $Y$.
\item If $\boundary Y\neq\emptyset$, then the triple $(Y_1\cap\boundary Y, Y_2\cap\boundary Y, Y_3\cap\boundary Y)$ is a trisection of $\boundary Y$.
\end{itemize}
We refer to $Y_1\cap Y_2\cap Y_3$ as the {\emph{central submanifold}} of $\M$.
\end{definition}

Definition~\ref{tridef5} agrees completely with the definition of~\cite{multisections} (in the $5$-dimensional case), except where we require all inclusions to be smooth rather than piecewise linear.

From now on, ``trisection'' will always mean ``smooth trisection.''  Our main theorems are the following:

\begin{theorem}
\label{thrm:tri-closed}
Every closed, smooth, oriented 5-manifold $Y$ admits a trisection naturally induced by a chosen handle structure on $Y$.
\end{theorem}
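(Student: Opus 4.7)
The plan is to adapt the Gay-Kirby construction (Theorem \ref{thrm:GK-tri}) one dimension up, working from a self-indexed handle decomposition of $Y$ with handles $\mathcal{H}_0,\ldots,\mathcal{H}_5$. I will take $Y_1$ to be a closed regular neighborhood of $\mathcal{H}_0\cup\mathcal{H}_1$, so that $Y_1\cong\natural_{k_1} S^1\times B^4$ with $k_1=|\mathcal{H}_1|$. Dually, $Y_3$ will be a closed regular neighborhood of $\mathcal{H}_4\cup\mathcal{H}_5$ viewed upside down as $1$- and $0$-handles, so that $Y_3\cong\natural_{k_3} S^1\times B^4$ with $k_3=|\mathcal{H}_4|$. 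All $2$- and $3$-handles lie in the middle cobordism $M:=\overline{Y\setminus(Y_1\cup Y_3)}$, which must become $Y_2$ together with inward-facing portions of $Y_1\cap Y_2$ and $Y_2\cap Y_3$.

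To define the pairwise intersections, observe that each boundary $\boundary Y_1\cong\#^{k_1}(S^1\times S^3)$ and $\boundary Y_3\cong\#^{k_3}(S^1\times S^3)$ splits naturally into two $4$-dimensional $1$-handlebodies meeting along $\#^{k_i}(S^1\times S^2)$: one half is the $0/1$-handle portion of the standard handle structure of the boundary, and the other is the dual $3/4$-handle portion. These splittings give candidates for $Y_1\cap Y_2$, $Y_1\cap Y_3$ on $\boundary Y_1$ and, analogously, on $\boundary Y_3$, with central $3$-manifold $\Sigma\cong\#^{k_1}(S^1\times S^2)$; I will arrange $k_1=k_3$ by inserting cancelling $1/4$-handle pairs if necessary. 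To extend these splittings across $M$, I will handle-slide each $2$-handle over the $1$-handles and each $3$-handle over the $4$-handles so that the resulting attaching regions in $\boundary Y_1$ (respectively $\boundary Y_3$) lie entirely within the designated $Y_1\cap Y_2$ (respectively $Y_2\cap Y_3$) sector and miss $\Sigma$.

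Finally, let $Y_2$ be $M$ with the $Y_1\cap Y_3$ collars of each boundary removed. The main obstacle will be verifying that $Y_2\cong\natural_{k_2} S^1\times B^4$ for some $k_2$. A priori, $Y_2$ contains all the $2$- and $3$-handle cores and cocores, potentially contributing nontrivial higher-dimensional skeleta. However, since each $5$-dimensional $2$- or $3$-handle is itself diffeomorphic to $B^5$, I will argue that attaching such a $5$-ball along a standardly embedded codimension-$0$ piece of the boundary of a $5$-dimensional $1$-handlebody again produces a $5$-dimensional $1$-handlebody. This amounts to showing that $Y_2$ deformation retracts onto a wedge of circles, which in turn relies on pairing each $2$-handle cocore and $3$-handle core with a compressing disk supplied by the adjacent $Y_1\cap Y_2$ or $Y_2\cap Y_3$ piece (each itself a regular neighborhood of a $2$-complex). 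Once $Y_2$ is in the desired form, the remaining axioms of Definition \ref{tridef5} --- that each $Y_i\cap Y_j$ is a regular neighborhood of its $2$-skeleton and that the triple intersection $\Sigma$ is a smoothly properly embedded closed $3$-manifold --- will follow from the construction and the handle-slide arrangement above. The hard part is executing these adjustments coherently across all of $M$ so that the global structure of $Y_2$ as a $1$-handlebody is preserved.
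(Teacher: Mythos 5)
Your plan starts from the same philosophy as the paper's proof --- partition the handles of a self-indexed handle decomposition into $(0,1)$, $(2,3)$, and $(4,5)$ groups and let each group seed one sector --- but the execution differs, and the differences introduce genuine gaps.

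The central technical issue is how $Y_1$ and $Y_3$ come to share a common boundary piece $Y_1\cap Y_3$ and how the triple intersection $\Sigma$ propagates through the middle cobordism $M$. In the paper, this is handled by working at the middle level $f^{-1}(5/2)$: one identifies the belt $2$-spheres $R$ of the $2$-handles and the attaching $2$-spheres $S$ of the $3$-handles there, chooses a Heegaard-type splitting $f^{-1}(5/2)=H_1\cup H_2$ into two $4$-dimensional $0$-,$1$-,$2$-handlebodies with $\nu(R\cup S)\subset H_2$, and then flows the cylinder $H_1\times[3/2,7/2]$ through $M$. Because the spheres $R\cup S$ sit in $H_2$, that cylinder exists unimpeded; $\Sigma=\partial H_1$ and $Y_1\cap Y_3=H_1$. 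Your proposal instead fixes a standard splitting of $\partial Y_1\cong\#^{k_1}(S^1\times S^3)$ into two $\natural S^1\times B^3$'s with $\Sigma\cong\#^{k_1}(S^1\times S^2)$, tries to match the analogous splitting of $\partial Y_3$, and hopes that handle-sliding $2$- and $3$-handles will push their attaching regions into the designated sectors. That is exactly backwards from the paper: the paper chooses the splitting to fit the spheres, while you fix the splitting and try to move the spheres. There is no argument offered that the resulting picture can be made coherent across all of $M$, and the requirement that $\Sigma$ be a specific $\#^{k}(S^1\times S^2)$ (forcing $k_1=k_3$) is both unnecessary --- in the paper's construction $\Sigma$ is an essentially arbitrary $3$-manifold $\partial H_1$, and a valid trisection allows $k_1\neq k_3$ --- and not shown to be achievable. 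Note also that the $Y_i\cap Y_j$ in a $5$-manifold trisection are regular neighborhoods of $2$-complexes, not merely $1$-handlebodies; restricting to $1$-handlebodies is an over-constraint with no justification.

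The verification that $Y_2\cong\natural_{k_2}S^1\times B^4$ is also not substantiated. The claim that ``attaching such a $5$-ball along a standardly embedded codimension-$0$ piece of the boundary of a $5$-dimensional $1$-handlebody again produces a $5$-dimensional $1$-handlebody'' is false in general: attaching a handle of index $\geq 2$ to a $1$-handlebody changes its homotopy type unless the handle cancels. The paper's proof supplies the precise cancellation: $Y_2\cong H_2\times[0,1]$ together with $3$-handles coming from the $3$-handles of $Y$ (attached along $S\subset H_2\times\{1\}$) and from the upside-down $2$-handles of $Y$ (attached along $R\subset H_2\times\{0\}$), and each of these $3$-handles cancels a unique $2$-handle of $H_2\times[0,1]$ because $H_2$ was built as $\nu(R\cup S)$ together with $1$-handles. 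Your proposal gestures at ``pairing each $2$-handle cocore and $3$-handle core with a compressing disk,'' which is the right idea, but it requires exactly the careful set-up of $H_2$ around $R\cup S$ that your outline skips. To repair the argument you would need to (i) work from the middle level set so that a single $4$-dimensional piece can be flowed through the $2$- and $3$-handle region, (ii) choose the Heegaard-type splitting of that level set to contain $\nu(R\cup S)$ in the complementary piece rather than prescribing $\Sigma$ in advance, and (iii) replace the ball-attaching claim with the explicit handle cancellation.
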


\begin{theorem}\label{thrm:cobord}
Let $Y$ be smooth, oriented 5-manifold with positive boundary $A$ and negative boundary $B$.  Fix trisections $\T_A$ and $\T_B$ of $A$ and $B$, respectively.  There exist a trisection $\T_Y$ of $Y$ whose restriction to $A$ (resp. $B$) is $\T_A$ (resp. $\T_B$).
\end{theorem}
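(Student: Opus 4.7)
The plan is to adapt the handle-theoretic construction behind Theorem~\ref{thrm:tri-closed} to the relative setting, by building the trisection inward from each boundary component of $Y$ and reconciling the two partial pieces at a middle level. First, I would choose disjoint bicollars $\nu A \cong A \times [0,1]$ and $\nu B \cong B \times [0,1]$ and declare $Y_i \cap \nu A := A_i \times [0,1]$ and $Y_i \cap \nu B := B_i \times [0,1]$, where $\T_A = (A_1,A_2,A_3)$ and $\T_B = (B_1,B_2,B_3)$. The conditions of Definition~\ref{tridef5} over the collars follow immediately from those for $\T_A$ and $\T_B$ together with the product structure.

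I would then take a self-indexing Morse function $f : Y \setminus \int{(\nu A \cup \nu B)} \to [0,5]$ with $f^{-1}(0) = B \times \{1\}$, $f^{-1}(5) = A \times \{1\}$, and a critical point of index $k$ appearing only on level $k$. Setting $M := f^{-1}(5/2)$, a closed $4$-manifold, the sublevel set $f^{-1}([0,5/2])$ is obtained from the inner end of the $B$-collar by attaching $0$-, $1$-, and $2$-handles, while dually $f^{-1}([5/2,5])$ is obtained from the inner end of the $A$-collar by attaching $0$-, $1$-, and $2$-handles (the original $3$-, $4$-, and $5$-handles, turned upside down). Extending the collar trisection on $B$ across the low-index handles one at a time --- imitating the argument proving Theorem~\ref{thrm:tri-closed}, so that each $k$-handle with $k \le 2$ is absorbed into a designated piece $Y_i$ while preserving the $1$-handlebody structure of the pieces and the codimension conditions on pairwise and triple intersections --- should yield a trisection of $f^{-1}([0,5/2])$ restricting to $\T_B$ on $B$ and to some induced $4$-manifold trisection $\T_M^-$ on $M$. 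The symmetric argument from the $A$ side would produce a trisection of $f^{-1}([5/2,5])$ restricting to $\T_A$ on $A$ and to some $\T_M^+$ on $M$.

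The hard part will be the reconciliation: I need to interpose a trisection of $M \times I$, as a $5$-manifold with boundary, that induces $\T_M^-$ on one end and $\T_M^+$ on the other. By the Gay--Kirby stable uniqueness theorem for $4$-manifold trisections, $\T_M^-$ and $\T_M^+$ are related by a finite sequence of stabilizations and inverse stabilizations, so it would suffice to realize each elementary stabilization as a trisected cobordism from $M$ to itself. A single stabilization should correspond to attaching a standard trisected $1$-handle (an $S^1 \times B^4$ summand) to the trivial cobordism along an appropriate $4$-ball in $M$; this local model is the essential computation, and concatenating such trisected cobordisms would bridge $\T_M^-$ and $\T_M^+$. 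Gluing the three trisected pieces --- $f^{-1}([0,5/2])$, the reconciliation cobordism, and $f^{-1}([5/2,5])$ --- along their matching $4$-manifold trisections of $M$ would then produce the desired trisection of $Y$ restricting to $\T_A$ and $\T_B$ on the boundary.
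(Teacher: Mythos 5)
Your high-level outline matches an alternative the paper itself mentions (build from both ends, split at a middle level, and reconcile via a trisected product cobordism realizing a sequence of stabilizations), and the idea of realizing each elementary stabilization as a trisected cobordism $M \times I$ is exactly the paper's Lemma~\ref{stabilize} feeding into Proposition~\ref{prop:change-trisection}. But two substantive gaps remain, and they are where the real work lives.

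First, your claim that each $k$-handle with $k \le 2$ can be ``absorbed into a designated piece $Y_i$ while preserving the $1$-handlebody structure'' is not how the extension can go, and it conflates the closed-manifold argument with the relative one. In the closed case you freely choose a Heegaard splitting of the middle level set and define the $Y_i$ from it; in the relative case the trisection on the boundary is fixed, so you have no such freedom, and you cannot simply route all $1$-handles into $Y_1$. What the paper actually does is extend the trisection across each critical point by splitting the handle \emph{radially} among all three sectors, pulling back the rectangular trisection of the disk through explicit Morse coordinates. This forces the attaching sphere of the handle to sit in a controlled position relative to the existing level-set trisection: for index $1$, an $S^0$ in the central surface; for index $2$, an $S^1$ that is a core of all three pieces with a prescribed framing; for index $3$, an $S^2$ in $1$-bridge position; for index $4$, an $S^3$ in standard position. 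Arranging this is nontrivial --- for index $2$ it is Lemma~\ref{lemmaindex2}, which needs $3$-stabilizations to embed the curve plus a Rokhlin-form argument to hit the required framing; for index $3$ it invokes the Meier--Zupan bridge-trisection theorem. Your proposal never confronts this placement problem, and ``imitating'' the closed construction will not produce it.

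Second, you treat gluing along a matching boundary trisection as automatic. It is not: gluing two trisected $5$-dimensional $1$-handlebodies along a common $4$-dimensional $1$-handlebody generically creates $2$-handles, so the resulting sectors need not be $1$-handlebodies. The paper isolates the condition needed to make this work --- \emph{strong compatibility}, meaning the inclusion of each boundary sector sends cores of the $4$-dimensional handlebody to cores of the $5$-dimensional handlebody --- and proves in Lemma~\ref{gluinglemma} that under this hypothesis the introduced $2$-handles cancel geometrically. Your proof needs to verify strong compatibility for each of the three pieces you intend to glue, and that verification is part of what the index-by-index propositions establish. Without it, the final gluing step is unsupported.
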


Theorem~\ref{thrm:cobord} together with the fact that every closed $4$-manifold admits a trisection~\cite{gaykirby} implies the following theorem.

\begin{theorem}
Every compact, smooth, oriented 5-manifold $Y$ admits a trisection naturally induced by a chosen handle structure on $Y$.
\end{theorem}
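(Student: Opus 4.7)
The plan is to reduce the statement to Theorem~\ref{thrm:tri-closed} and Theorem~\ref{thrm:cobord}, bridged by the Gay--Kirby existence result for trisections of closed 4-manifolds~\cite{gaykirby}. The argument splits according to whether $Y$ has boundary.

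If $\partial Y = \emptyset$, then $Y$ is a closed, smooth, oriented 5-manifold and the conclusion is exactly Theorem~\ref{thrm:tri-closed} applied to the chosen handle structure, with nothing further to do.

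If $\partial Y \neq \emptyset$, then $\partial Y$ is a closed, smooth, oriented 4-manifold (possibly disconnected). I would first trisect each component of $\partial Y$ using~\cite{gaykirby}, assembling these into a trisection $\T_{\partial Y}$ of the whole boundary. Next, I would view $Y$ as a cobordism with positive boundary $A = \partial Y$ and empty negative boundary $B = \emptyset$ (the division between positive and negative boundary is immaterial here since the negative side is empty), and then invoke Theorem~\ref{thrm:cobord} with input $\T_A = \T_{\partial Y}$ and $\T_B$ vacuous. The output is a trisection $\T_Y$ of $Y$ whose restriction to $\partial Y$ is $\T_{\partial Y}$, which is the required existence statement.

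The only subtle clause is ``naturally induced by a chosen handle structure on $Y$.'' To arrange this, I would start from the given handle decomposition of $Y$ and restrict it to an induced handle structure on $\partial Y$, which then feeds into the handle-theoretic Gay--Kirby construction (the same one invoked in the proof of Theorem~\ref{thrm:GK-tri}) to produce $\T_{\partial Y}$ compatibly with the handles on $\partial Y$. The main obstacle I foresee is not the logical reduction, which is essentially immediate, but verifying that the construction in Theorem~\ref{thrm:cobord} is itself handle-theoretic relative to $(Y,\partial Y)$, so that its output respects the original handle structure on $Y$. Assuming Theorem~\ref{thrm:cobord} is proved by a relative analogue of the Gay--Kirby handle construction (as the introduction indicates), this compatibility is automatic and the corollary follows by direct assembly.
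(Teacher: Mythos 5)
Your proposal is correct and matches the paper's own argument: the paper derives this theorem as an immediate consequence of Theorem~\ref{thrm:cobord} (extending a trisection of $\partial Y$ across $Y$) together with the Gay--Kirby existence result for trisections of closed 4-manifolds, precisely the reduction you describe. Your case split on $\partial Y = \emptyset$ versus $\partial Y \neq \emptyset$, and your remark on the ``naturally induced by a handle structure'' clause, are just slightly more explicit renderings of the same reasoning.
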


Similarly, Theorem~\ref{thrm:tri-closed} could be taken to be a consequence of Theorem~\ref{thrm:cobord}.
%From now on, the term ``multisection'' will always refer to smooth multisection.

Note that we do not consider the question of uniqueness of smooth trisections of $5$-manifolds up to stabilization. In dimension $4$, the central submanifold of a trisection is an orientable surface, and the stabilization operation increases the genus of this surface. Since any two orientable surfaces are related by such stabilization, it is initially plausible that two trisections of a $4$-manifold are related by stabilization. In contrast, a trisection of a $5$-manifold has a $3$-manifold as its central submanifold. The natural stabilization operation on the trisection adds a connect-summand of $S^1\times S^2$ to this $3$-manifold. In general, two $3$-manifolds are not related by such stabilization, so we do not expect two arbitrary trisections of a $5$-manifold to be related by stabilization.

\begin{example}
Fix coordinates $(r,\theta,x,y,z)$ on $\mathbb{R}^5$, where $(r,\theta)$ are polar coordinates and $(x,y,z)$ are Cartesian. View $S^5$ as $\mathbb{R}^5\cup\{\infty\}$. For $i=1,2,3$, let $Y_i=\{2\pi(i-1)/3\le\theta\le 2\pi i/3\}\cup\{\infty\}$. Then $\T=(Y_1, Y_2, Y_3)$ is a trisection of $S^5$.

On the other hand, we may view $S^5$ as $\boundary(D^2\times D^2\times D^2)$. In this coordinate system, let $W_1=S^1\times D^2\times D^2$, $W_2=D^2\times S^1\times D^2$, $W_3=D^2\times D^2\times S^1$. Then $\T'=(W_1, W_2, W_3)$ is a trisection of $S^5$.

The central submanifold of $\T$ is a a $3$-sphere, while the central submanifold of $\T'$ is a $3$-torus. Since $S^3\#_m(S^1\times S^2)\not\cong T^3\#_n(S^1\times S^2)$ for any $m,n$, we conclude that $\T$ and $\T'$ have no common stabilization.
\end{example}

\begin{question}\label{uniquenessquestion}
Is there a suitable class of trisections of closed $5$-manifolds and a ``natural'' set of stabilization operations which relate any two of these trisections which are of the same $5$-manifold?
\end{question}

\subsection*{Acknowledgements}
The work in this paper was initiated and mostly completed at 
{\emph{Topology of Manifolds: interactions between high and low dimensions}} at MATRIX in January 2019. We thank Boris Lishak and Stephan Tillmann for interesting conversations at MATRIX and afterward about multisections. The second author's main takeaway was that trisections of closed 5-manifolds are significantly more complicated than those of closed 4-manifolds, and quadrisections of 6-manifolds are exponentially more so.

Thanks also to Mark Powell for a helpful comment.

The second author is a fellow in the National Science Foundation Graduate Research Fellowship program, under Grant No. DGE-1656466.

\section{Trisecting closed $5$-manifolds}

\subsection{Trisections of closed $4$-manifolds}

As preparation for the proof of Theorem~\ref{thrm:tri-closed}, we review the construction in~\cite{gaykirby} of a trisection from a handle decomposition of a closed 4-manifold $X$.  Roughly speaking, we partition the handles of $X$ into three subsets --- (1) the 0- and 1-handles; (2) the 2-handles; and (3) the 3-and 4-handles --- and each group becomes one sector of the trisection.

\begin{theorem}[\cite{gaykirby}]
\label{thrm:GK-tri}
Every closed, oriented, smooth 4-manifold admits a trisection.
\end{theorem}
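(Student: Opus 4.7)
The plan is to extract the trisection directly from a handle decomposition of $X$, following the partition scheme previewed before the theorem. First I would normalize the decomposition: arrange it to be self-indexing with a single $0$-handle and a single $4$-handle, letting $k_1,f_2,k_3$ denote the numbers of $1$-, $2$-, and $3$-handles. Define $X_1$ to be a small closed neighborhood of the union of the $0$-handle and the $1$-handles, and dually define $X_3$ to be a small neighborhood of the union of the $4$-handle and the $3$-handles (viewing $X$ upside down). Then by construction $X_1\cong\natural_{k_1}S^1\times B^3$ and $X_3\cong\natural_{k_3}S^1\times B^3$, with $\boundary X_1\cong\#_{k_1}S^1\times S^2$ and $\boundary X_3\cong\#_{k_3}S^1\times S^2$.

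Next I would handle the middle. The $2$-handles of $X$ provide a cobordism $C$ from $\boundary X_1$ to $\boundary X_3$. Choose an integer $g\ge\max(k_1,k_3,f_2)$ and pick a genus-$g$ Heegaard splitting $\boundary X_1=H_1^-\cup_{\Sigma}H_1^+$ in which the attaching circles of the $2$-handles can be simultaneously isotoped to lie on $\Sigma$, with $H_1^-$ the ``standard'' handlebody meeting the cocores of the $1$-handles and $H_1^+$ obtained by stabilizing the natural genus-$k_1$ splitting. By a similar choice on the other side of $C$, arrange a genus-$g$ Heegaard splitting $\boundary X_3=H_3^-\cup_{\Sigma'}H_3^+$ so that the attaching circles of the dual $2$-handles of $C$ (i.e.\ the cocores of the $2$-handles, viewed from $X_3$) lie on $\Sigma'$. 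The central surface of the eventual trisection will be $\Sigma$, and $X_1\cap X_2$ and $X_1\cap X_3$ will be $H_1^+$ and $H_1^-$ respectively (with analogous identifications on the $X_3$ side).

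Now define $X_2$ to be the closure of $X\setminus(X_1\cup X_3)$, after thickening the handlebodies $H_1^+$ and $H_3^+$ into $X_1$ and $X_3$ by a collar so that $X_2$ picks up a product neighborhood $\Sigma\times I\times I$ of the central surface together with the $2$-handles attached along curves in $\Sigma$. The content is that $X_2$ so defined is a $4$-dimensional $1$-handlebody $\natural_{g}S^1\times B^3$: this follows because starting from the genus-$g$ handlebody $H_1^+\times I$, each $2$-handle of $C$ is attached along a curve sitting in the genus-$g$ surface $\Sigma\times\{1\}\subset\boundary(H_1^+\times I)$, and such an attachment either cancels one of the $1$-handles of $H_1^+\times I$ or leaves its $1$-handlebody structure intact after a slide, provided the attaching curves were chosen to be in ``bridge position'' with respect to the Heegaard surface. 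The required double intersections $X_i\cap X_j$ are then handlebodies by construction, and $X_1\cap X_2\cap X_3=\Sigma$.

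The main obstacle is the simultaneous arrangement step: pushing the $2$-handle attaching circles into a Heegaard surface on one side is easy, but one must also arrange that from the $X_3$ side the dual picture has matching structure, and that after the attachments the resulting piece $X_2$ really is a $1$-handlebody (equivalently, that the middle cobordism is a ``relative trisection" cobordism). This is where the freedom to stabilize $g$ is used: by allowing $g$ to be large, one can always introduce enough cancelling pairs in the Heegaard surface to realize the $2$-handle attaching link in a standard way and to ensure the dual description on $\boundary X_3$ is compatible. Verifying this combinatorial standardization, which is essentially the content of Gay and Kirby's argument translating between handle decompositions and Morse $2$-functions, is the step where most of the work lies; once accomplished, the four bullet points of Definition~\ref{tridef4} can be checked directly from the explicit model.
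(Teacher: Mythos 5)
Your plan follows the same overall strategy as the paper (partition the handles into $\{0,1\}$, $\{2\}$, and $\{3,4\}$; Heegaard-split the middle level set; thicken a handlebody to form the spine of $X_1\cap X_3$), but you have placed the $2$-handle attaching link $L$ \emph{on} the Heegaard surface $\Sigma$, and this is a real departure that opens a gap you do not close. In the paper's construction, $L$ is taken to lie in the \emph{interior} of one Heegaard handlebody: one chooses a tubular neighborhood $\nu(L)\subset f^{-1}(3/2)$ and a relative handle decomposition of the link exterior, and then sets $H_2=\nu(L)\cup(\text{$1$-handles})$ and $H_1=(\text{$2$- and $3$-handles})$. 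This placement does two things simultaneously that your setup does not. First, since $L$ is disjoint from $H_1$, the gradient flow carries $H_1$ unchanged past the $2$-handle region, which is what produces the $3$-dimensional handlebody $X_1\cap X_3\cong H_1$ ``bridging'' the two outer sectors; with $L\subset\Sigma=\partial H_1^\pm$ it is not clear how to carry either handlebody through the cobordism, and you never actually verify that $X_1\cap X_3$ is nonempty, let alone a $1$-handlebody. Second, because the components of $L$ become cores of $1$-handles of $H_2$, the corresponding $4$-dimensional $1$-handles of $H_2\times I$ are cancelled geometrically and on the nose by the $2$-handles of $X$, giving $X_2\cong\natural S^1\times B^3$ for free. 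Your corresponding claim --- that a $2$-handle attached along a curve in $\Sigma\times\{1\}\subset\partial(H_1^+\times I)$ ``either cancels one of the $1$-handles of $H_1^+\times I$ or leaves its $1$-handlebody structure intact after a slide, provided the attaching curves were chosen to be in `bridge position''' --- is exactly the content that must be proved and is not supplied by ``bridge position'' in any standard sense; a curve on $\Sigma$ has no reason to run geometrically once over a $1$-handle of $H_1^+\times I$.

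You flag this as the hard ``simultaneous arrangement'' step and defer it to Gay--Kirby's Morse $2$-function machinery, but the point of the handle-theoretic proof is precisely to avoid that machinery: moving $L$ off the splitting surface and into the interior of one handlebody (making it part of the core system of that handlebody) is the small repositioning that dissolves the difficulty. With that change, the cancellation and the gradient-flow collar for $X_1\cap X_3$ both become immediate, and the verification of Definition~\ref{tridef4} is the short computation appearing at the end of the paper's proof. As written, your argument does not establish that $X_2$ is a $1$-handlebody nor that $X_1\cap X_3$ is a handlebody, so it is incomplete; you also appear to misstate the genus of $X_2$ (the $2$-handles reduce the genus of $H_1^+\times I$, so one should not expect $X_2\cong\natural_g S^1\times B^3$).
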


\begin{proof}
Take a self-indexing Morse function $f$ on $X$ and let $k_i$ be the the number of index-$i$ critical points.  Without loss of generality, assume $k_0 = k_4 = 1$.  Identify the attaching link $L$ of the 2-handles in the level set $f^{-1}(3/2)$ and choose a tubular neighborhood $\nu(L)$ in $f^{-1}(3/2)$.  Choose a relative handle decomposition on the link complement $f^{-1}(3/2) \smallsetminus \nu(L)$, which we can assume consists of 1-,2- and 3-handles.  Let $H_1$ be the union of the 2- and 3-handles of this decomposition and let $H_2$ be the union of $\nu(L)$ with the 1-handles.  Then clearly $H_1$ and $H_2$ are 3-dimensional 1-handlebodies, meeting along a closed surface $S$.  Equivalently, this gives a Heegaard splitting $f^{-1}(3/2)= H_1 \cup_S H_2$ of the level set.

The attaching link lies completely in $H_2$, so by flowing along a gradient vector field we can find the cylinder $H_1 \times [3/2,5/2]$ in $X$.  Define $X_1 = f^{-1}([0,3/2]) \cup H_1 \times [3/2,2]$; it retracts onto the sublevel set $f^{-1}([0,3/2])$ and so is a 1-handlebody.  Similarly, define $X_3 =  f^{-1}([5/2,4]) \cup H_1 \times [3/2,2]$; it retracts on the superlevel set $f^{-1}([5/2,4])$ and is also a 1-handlebody.  Finally, let $X_2$ be the complement of $X_1 \cup X_3$ in $X$.  Abstractly, it is diffeomorphic to $H_2 \times I \cup \{\text{2-handles}\}$.  The manifold $H_2 \times I$ is a 1-handlebody and $H_2$ was obtained from $\nu(L)$ by attaching 1-handles.  Thus each 2-handle cancels a unique 1-handle in $H_2 \times I$.  Thus, the result is a 1-handlebody. Moreover, the double intersections $X_1\cap X_2=H_2$, $X_3\cap X_1= H_1$,$ X_2\cap X_3=(H_2$ Dehn surgered along $L)$ are all $3$-dimensional $1$-handlebodies. The central submanifold is the surface $X_1\cap X_2\cap X_3=S$.
\end{proof}

\subsection{Trisections of closed $5$-manifolds}

We can now describe how to obtain a trisection of a closed 5-manifold from a handle decomposition.  The essential idea, as in the prequel, is to partition the handles into three sets: (1) the 0- and 1-handles; (2) the 2- and 3-handles; and (3) the 4- and 5-handles.  

\begin{proof}[Proof of Theorem~\ref{thrm:tri-closed}]
Take a self-indexing Morse function $f$ and let $k_i$ be the the number of index-$i$ critical points.  Without loss of generality, assume $k_0 = k_5 = 1$.  In the level set $f^{-1}(5/2)$, let $S$ denote the attaching 2-spheres of the 3-handles above and let $R$ denote the belt 2-spheres of the 2-handles below.  We can assume they intersect transversely and then choose a tubular neighborhood $\nu(R \cup S)$.  Choose a relative handle decomposition of $f^{-1}(5/2)\smallsetminus\nu(R\cup S)$ which we can assume has no 0-handles.  Let $H_1$ be the union of the 2-,3- and 4-handles of this handle decomposition and let $H_2$ be the union of $\nu(R \cup S)$ with the 1-handles.  Clearly, $H_1$ and $H_2$ can be built with only 0-,1- and 2-handles and meet along a closed 3-manifold.

By flowing along a gradient vector field, we can find the cylinder $H_1 \times [3/2,7/2]$ in $Y$.  Define $Y_1 = f^{-1}([0,3/2]) \cup H_1 \times [3/2,5/2]$; it retracts onto the sublevel set $f^{-1}([0,3/2])$ and so is a 1-handlebody.  Similarly, define $Y_3 =  f^{-1}([7/2,5]) \cup H_1 \times [5/2,7/2]$; it retracts on the superlevel set $f^{-1}([7/2,5])$ and is also a 1-handlebody.  Finally, let $Y_2$ be the complement of $Y_1 \cup Y_3$ in $Y$.  Although it contains the 2- and 3-handles of $Y$, it is abstractly diffeomorphic to the union of $H_2 \times [0,1]$ with two collections of 3-handles.  The 3-handles of $Y$ are attached along the link $S \subset H_2 \times \{1\}$.  By turning the 2-handles of $Y$ upside down, we can view them as 3-handles attaching along $R \subset H_2 \times \{0\}$.  Each of these three handles cancel a unique 2-handle in $H_2 \times [0,1]$.  Moreover, these are the only 2-handles and so the result is a 1-handlebody. Moreover, the double intersections $Y_1\cap Y_2=(H_2$ surgered along the belt spheres of the $2$-handles$)$, $Y_3\cap Y_1= H_1$, $Y_2\cap Y_3=(H_2$ surgered along the attaching spheres of the $3$-handles$)$ are all $4$-dimensional $0$, $1$, $2$-handlebodies. The central submanifold is the $3$-manifold $Y_1\cap Y_2\cap Y_3=\boundary H_1$.
\end{proof}

\section{Trisecting $5$-manifolds with boundary}

Tillmann and Rubinstein~\cite{multisections} do not fix a definition of a multisection of a manifold with boundary. A relative trisection of a $4$-manifold $X$ with boundary is well understood, having been originally introduced in~\cite{gaykirby} and fleshed out in~\cite{nickthesis}. A diagrammatic theory for relative trisections then appeared in~\cite{reldiagrams}, and has continued to appear throughout trisection literature.  Briefly, a relative trisection of a $4$-manifold with boundary is required to induce an open book on $\boundary X$, so that relative trisections inducing the same boundary data can be glued to find trisections of the union. We give an analogous condition in this section.

%In the following subsections, let $Y$ be a cobordism between closed $4$-manifolds $X$ and $Z$; let $\T_X=(X_1,X_2,X_3)$ and $\T_Z=(Z_1, Z_2, Z_3)$ be trisections of $X$ and $Z$, respectively; and let $f:Y\to [0,1]$ be a relative Morse function so that $\boundary Y=f^{-1}(\{0,1\})$, where $X=f^{-1}(0)$ and $Z=f^{-1}(1)$. We say that the multisection $\M$ {\it restricts to $\T_X$} if $Y_i\cap f^{-1}(0)=X_i$ for each $i=1,2,3$.  Similarly, it {\it restricts to $\T_Z$} if $Y_i\cap h^{-1}(1)=Z_i$ for each $i=1,2,3$.

\subsection{Gluing cobordisms}

In order to build a trisection of $Y$ from trisections of elementary pieces, we need to check that the topological conditions on a trisection hold after gluing together a pair of trisected cobordisms.

Let $Y$ be a compact, smooth 5-manifold with boundary. Let $\M = (Y_1,Y_2,Y_3)$ be a trisection of $Y$, and let $X$ be one of the boundary components of $Y$.  A trisection $\M$ of $Y$ is {\it compatible} with a trisection $\T = (X_1,X_2,X_3)$ of $X$ if its restriction $\M|_X$ is equal to $\T$.  A trisection $\M$ of $Y$ is {\it strongly compatible} with $\T$ if it is compatible with $\T$ and the inclusion $X_i \hookrightarrow Y_i$ maps a core of the $4$-dimensional handlebody $X_i$ to a core of the $5$-dimensional handlebody $Y_i$ for each $i$. % In other words, the cores of the 4-dimensional handlebody $X_i$ include into cores of the $5$-dimensional handlebodies $Y_i$.
If $\M$ is strongly compatible with its restriction to $X$, we also say $\M$ is strongly compatible with $X$.  If $\M$ is strongly compatible with every boundary component, we say that $\M$ is strongly compatible with $Y$.

Let $Y$ be a 5-manifold with boundary.  To view $Y$ as a cobordism of $4$-manifolds, choose a decomposition $\partial Y = \partial Y_+ \coprod (-\partial Y_-)$ (where one of $\partial Y_\pm$ may be empty).  Suppose that $W$ is another cobordism of $4$-manifolds and there is a diffeomorphism $\phi: \partial Y_+ \rightarrow \partial W_-$.  Then we can {\it glue} $Y$ to $W$ and obtain a new cobordism $Y \cup_{\phi} W$ with boundary $\partial Y \cup_{\phi} \partial W = \partial W_+ \coprod (- \partial Y_-)$.

\begin{lemma}
\label{gluinglemma}
Let $Y$ and $W$ be cobordisms of $4$-manifolds and let $\phi: \partial Y_+ \rightarrow \partial W_-$ be a diffeomorphism.  Suppose that $\M_Y = (Y_1,Y_2,Y_3)$ and $\M_W = (W_1,W_2,W_3)$ are strongly compatible trisections of $Y$ and $W$ (respectively) and that $\phi$ identifies $\M|_{\partial Y_+}$ with $\M|_{\partial W_-}$.  Then $\M_{Y \cup W} = (Y_1 \cup W_1,Y_2 \cup W_2, Y_3 \cup W_3)$ is a trisection that is strongly compatible with $Y \cup_{\phi} W$.
\end{lemma}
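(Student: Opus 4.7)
The plan is to verify each bullet of Definition~\ref{tridef5} for $\M_{Y\cup W}$ together with the strong compatibility condition on the remaining boundary, using a trisection-respecting collar of the gluing hypersurface as the main tool.

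First I would produce a collar $c_Y\colon\boundary Y_+\times[0,\epsilon)\into Y$ which is stratified, meaning $c_Y(X_i\times[0,\epsilon))\subset Y_i$, $c_Y((X_i\cap X_j)\times[0,\epsilon))\subset Y_i\cap Y_j$, and $c_Y((X_1\cap X_2\cap X_3)\times[0,\epsilon))\subset Y_1\cap Y_2\cap Y_3$. Such a collar exists by a stratified version of the collar neighborhood theorem: choose a smooth vector field near $\boundary Y_+$ that is transverse to $\boundary Y_+$ and tangent to each lower-dimensional trisection stratum, then flow. An analogous collar $c_W$ of $\boundary W_-$ in $W$ combines with $c_Y$ under $\phi$ to produce a trisection-respecting bicollar $X\times(-\epsilon,\epsilon)\subset Y\cup_\phi W$, which automatically makes each $Z_i:=Y_i\cup W_i$, $Z_i\cap Z_j$, and $Z_1\cap Z_2\cap Z_3$ a smooth submanifold of the expected dimension, so that the smoothness and proper-embedding bullets hold.

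Next I would verify the topological bullets. For the single-piece bullet, strong compatibility provides a core $C_i$ of $X_i$ that is simultaneously a core of $Y_i$ and of $W_i$. After a generic perturbation we may assume $C_i$ lies in the interior of $X_i$, so $C_i$ lies in the interior of $Z_i$ once $X_i$ has been glued into the interior of $Y\cup_\phi W$. Then $Z_i$ is a compact smooth 5-manifold that deformation retracts onto $C_i\simeq\vee_{k_i}S^1$ sitting in its interior, so $Z_i$ is a smooth regular neighborhood of $\vee_{k_i}S^1$ and therefore diffeomorphic to $\natural_{k_i}S^1\times B^4$. The triple intersection $Z_1\cap Z_2\cap Z_3$ is the smooth union of the properly embedded 3-manifolds $Y_1\cap Y_2\cap Y_3$ and $W_1\cap W_2\cap W_3$ along the common 2-manifold $X_1\cap X_2\cap X_3$ in their boundaries, hence is a smoothly and properly embedded 3-manifold in $Y\cup_\phi W$. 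The new boundary $\boundary(Y\cup_\phi W)=\boundary Y_-\sqcup\boundary W_+$ inherits the original trisections from $\M_Y$ and $\M_W$, and the original strong compatibility there is preserved since the cores of $X_i\cap\boundary Y_-$ and $X_i\cap\boundary W_+$ still map to cores of $Z_i$ via the inclusions $Y_i\hookrightarrow Z_i$ and $W_i\hookrightarrow Z_i$.

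The main obstacle will be the double-intersection bullet: showing that $Z_i\cap Z_j=(Y_i\cap Y_j)\cup_{X_i\cap X_j}(W_i\cap W_j)$ is a regular neighborhood of its $2$-skeleton, i.e., a $4$-dimensional $0,1,2$-handlebody. The spines of $Y_i\cap Y_j$ and $W_i\cap W_j$ are genuine $2$-complexes, while the hypothesized strong compatibility only identifies $1$-dimensional cores, so the two spines need not a priori match along $X_i\cap X_j$. The approach I would take is to use the stratified collar to push each spine close to $X_i\cap X_j$ and then invoke the strong compatibility of the ambient $5$-dimensional pieces $Y_i,Y_j$ (and their analogues in $W$) to conclude that $Y_i\cap Y_j$ admits a handle decomposition relative to $X_i\cap X_j$ using handles of index $\le 2$ only; once both sides are so expressed, they concatenate directly to give the desired handle decomposition of $Z_i\cap Z_j$.
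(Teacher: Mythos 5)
Your overall plan (verify each bullet of Definition~\ref{tridef5}, using a trisection-respecting collar to handle smoothness) is sound, and the stratified collar is a reasonable way to make the gluing explicit. But your two substantive steps diverge from the paper's proof and each leaves a gap.

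For the top-dimensional pieces $Z_i = Y_i\cup W_i$, you argue that $Z_i$ deformation retracts onto the common core $C_i\simeq\vee_{k_i}S^1$ and then assert that this makes $Z_i$ a regular neighborhood of $\vee_{k_i}S^1$, hence $\natural_{k_i}S^1\times B^4$. That inference is not automatic: a compact $5$-manifold homotopy equivalent to a wedge of circles need not obviously be a $1$-handlebody until you invoke more machinery (minimize a handle decomposition, kill higher handles via the Whitney trick in the middle dimensions, use $Wh(F_{k_i})=0$). The paper instead works entirely with handles and uses strong compatibility geometrically: $Y_i\cup W_i$ has a decomposition with only $0$-, $1$-, and $2$-handles because $Y_i$ and $W_i$ are glued along the $1$-handlebody $X_i$, and strong compatibility lets one choose each $2$-handle to run once over a cancelling $1$-handle coming from the matched cores, so the $2$-handles cancel \emph{geometrically}. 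That is much more elementary than what your argument would need to be made airtight.

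For the double intersections, you correctly flag this as the main issue, but your proposed fix is not yet an argument: you say you would "invoke the strong compatibility of the ambient $5$-dimensional pieces" to produce a relative handle decomposition of $Y_i\cap Y_j$ on $X_i\cap X_j$ with handles of index $\le 2$, yet strong compatibility is a hypothesis about $1$-dimensional cores of $X_i$ inside $Y_i$ and says nothing directly about the $2$-skeleton of $Y_i\cap Y_j$. This is exactly the mismatch you yourself point out two sentences earlier, and the proposal does not resolve it. The paper's route is different and does not use strong compatibility here at all: it observes that $Z_i\cap Z_j$ is two $4$-dimensional $0,1,2$-handlebodies glued along a common $3$-dimensional $0,1$-handlebody $X_i\cap X_j$ in their boundaries, and that attaching one such piece onto the other along a handlebody requires only $1$- and $2$-handles, so the union is again a $0,1,2$-handlebody. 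You would be better served following that line than trying to extract $2$-skeleton information from the $1$-dimensional core hypothesis.
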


\begin{proof}
By definition, each of $Y_i$ and $W_i$ has a handle decomposition with only $0$- and $1$-handles. Since they are glued along a $1$-handlebody, $Y_i\cup W_i$ has a handle decomposition with only $0$-, $1$-, and $2$-handles. The $2$-handles may be chosen to each run geometrically along a $1$-handle of $Y_i\cap\partial Y_+$ and an identified $1$-handle of $W_i\cap \partial W_-$ (as well as other $1$-handles) once, since $\M_Y$ and $\M_W$ are strongly compatible with $Y$ and $W$ respectively. By assumption, the $2$-handles can then be cancelled geometrically, so we conclude that $Y_i\cup W_i\cong\natural S^1\times B^4$.

For $i\neq j$, we have $(Y_i\cup W_i)\cap(Y_j\cup W_j)=(Y_i\cap Y_j)\cup_{Y_i\cap Y_j\cap (\partial Y_+\cong\partial W_-)} (W_i\cap W_j)$. Therefore, $(Y_i\cup W_i)\cap(Y_j\cup W_j)$ is obtained by gluing two $4$-dimensional $0$-,$1$-,$2$-handlebodies along a $3$-dimensional handlebody. To glue along a handlebody, we need need only add $1$- and $2$-handles, so $(Y_i\cup W_i)\cap(Y_j\cup W_j)$ is a $0$-, $1$-, $2$-handlebody, as desired.

The rest of Definition~\ref{tridef5} follows easily.

\end{proof}

\subsection{Standard trisections}

Our local models of trisections are obtained by pulling back a trisection on the unit disk $D$ in $\mathbb{R}^2$.  In radial coordinates, the symmetric trisection $D = D^s_1 \cup D^s_2 \cup D^s_3$ is defined by choosing the following subsets:
\begin{align*}
D^s_1 &= \left\{ 0 \leq \theta \leq \frac{2\pi}{3} \right\}, & D^s_2 &= \left\{\frac{2\pi}{3} \leq \theta \leq \frac{4\pi}{3} \right\}, &  D^s_3 &= \left\{ \frac{4\pi}{3} \leq \theta \leq 2 \pi \right\}.
\end{align*}
The symmetric trisection is symmetric under rotation by $2\pi/3$ (up to permuting indices). We also define a rectangular trisection $D = Y_1 \cup Y_2 \cup Y_3$ by setting
\begin{align*}
D^r_1 &= \{ x \geq 0 \}, & D^r_2 &= \{ x \leq 0, y \geq 0\}, & D^r_3 &= \{ x \leq 0, y \leq 0\}.
\end{align*}
Geometrically, the rectangular trisection is asymmetric. Up to diffeomorphism, this trisection is equivalent to the symmetric trisection.

%%%%%%%%%%%%%%%%%%%%%%%%%%%%%%%%%%%%%%%%%%%%%%%
\begin{figure}[h!]
\centering
\labellist
	\large\hair 2pt
%	\pinlabel $D^s_1$ at 90 90
%	\pinlabel $D^s_2$ at 40 65
%	\pinlabel $D^s_3$ at 90 40
%	\pinlabel $D^r_1$ at 282 65
%	\pinlabel $D^r_2$ at 230 90
%	\pinlabel $D^r_3$ at 230 40
%	%%%Arxiv values commented out:
	\pinlabel $D^s_1$ at 100 110
	\pinlabel $D^s_2$ at 45 80
	\pinlabel $D^s_3$ at 100 50
	\pinlabel $D^r_1$ at 312 80
	\pinlabel $D^r_2$ at 255 110
	\pinlabel $D^r_3$ at 255 50
\endlabellist
\includegraphics[width=.6\textwidth]{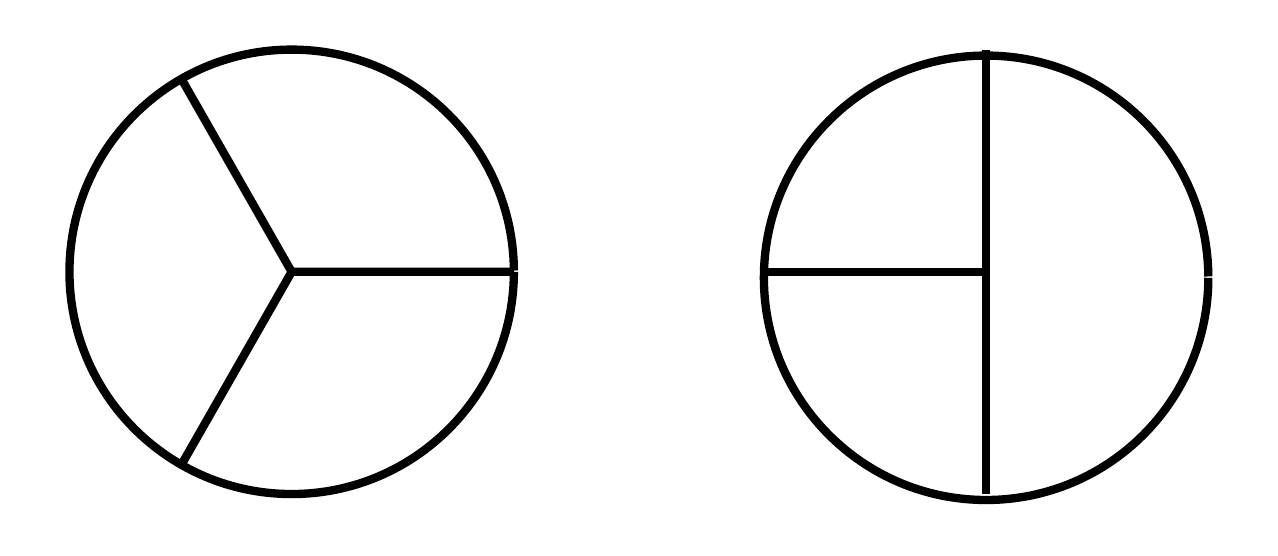}
\caption{The symmetric ({\it left}) and rectangular ({\it right}) trisections of the unit disk in $\mathbb{R}^2$.}
\label{fig:disk-tri}
\end{figure}
%%%%%%%%%%%%%%%%%%%%%%%%%%%%%%%%%%%%%%%%%%%%%%%

\begin{definition}
The {\it standard trisection} of $S^{k}$ for $k\ge 2$ %, for $2 \leq k \leq 5$,
is the decomposition $\T_{std} = \{\pi^{-1}(D^s_i) \cap S^{k}\}$ where $\pi: \mathbb{R}^{k+1} \rightarrow \mathbb{R}^2$ is a coordinate projection and $D^s_1 \cup D^s_2 \cup D^s_3$ is the standard trisection of the unit disk in $\mathbb{R}^2$.

The {\it standard trisection} of $B^{k}$, for $k\ge 2$ % $2 \leq k \leq 5$,
is the decomposition $\T_{std} = \{\pi^{-1}(D^s_i) \cap B^{k}\}$ where $\pi: \mathbb{R}^k \rightarrow \mathbb{R}^2$ is a coordinate projection and $D^s_1 \cup D^s_2 \cup D^s_3$ is the standard trisection of the unit disk in $\mathbb{R}^2$.
\end{definition}

When $k\not\in\{4,5\}$, a multisection of $S^k$ is not a trisection.

Note that we could have defined the standard trisection using the rectangular trisection of the disk rather than the symmetric trisection of the disk. Both definitions give equivalent (up to isotopy) trisections. Generally, we will use the coordinates of the rectangular trisection instead (for convenience). We will specify ``standard symmetric trisection'' or ``standard rectangular trisection,'' but a reader comfortable with trisections may read this as ``standard trisection.''

\begin{lemma}
\label{lemma:std}
Let $\T_{std} = (S_1,S_2,S_3)$ be the standard trisection of $S^{k}$.  Then each $S_i$ is diffeomorphic to $B^{k}$; each double intersection $S_i \cap S_j$ is diffeomorphic to $B^{k-1}$ and the central surface is diffeomorphic to $S^{k-2}$.

Let $\T_{std} = (D_1,D_2,D_3)$ be the standard trisection of $B^{k}$.  Then each $D_i$ is diffeomorphic to $B^{k}$; each double intersection $D_i \cap D_j$ is diffeomorphic to $B^{k-1}$ and the central surface is diffeomorphic to $B^{k-2}$.
\end{lemma}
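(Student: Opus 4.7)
My plan is to work with the rectangular trisection, as the authors suggest, and compute each piece via its defining coordinate inequalities. Let $\pi: \mathbb{R}^{k+1} \to \mathbb{R}^2$ denote projection to the first two coordinates. Unpacking the definitions, the three pieces of the standard rectangular trisection of $S^k$ are
\begin{align*}
S_1 &= \{\mathbf{x} \in S^k : x_1 \geq 0\}, \\
S_2 &= \{\mathbf{x} \in S^k : x_1 \leq 0,\ x_2 \geq 0\}, \\
S_3 &= \{\mathbf{x} \in S^k : x_1 \leq 0,\ x_2 \leq 0\},
\end{align*}
and analogously for the pieces $D_i$ of the trisection of $B^k$ (using a projection $\mathbb{R}^k \to \mathbb{R}^2$). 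The lemma now reduces to identifying these explicit subsets up to diffeomorphism.

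To see that each $S_i \cong B^k$, I would first observe that $S_1$ is a closed hemisphere, hence diffeomorphic to $B^k$ via orthogonal projection forgetting $x_1$. For $S_2$ (and symmetrically $S_3$), the piece is a lune of the sphere; I would identify it with $B^k$ by first recognizing the hemisphere $\{x_2 \geq 0\} \cap S^k$ as a copy of $B^k$, and then noting that imposing the further constraint $x_1 \leq 0$ cuts this ball along an equatorial $(k-1)$-disk into two halves, each of which is smoothly diffeomorphic to $B^k$ after smoothing corners.

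For the double and triple intersections the computation is immediate:
\begin{align*}
S_1 \cap S_2 &= \{x_1 = 0,\ x_2 \geq 0\} \cap S^k, \\
S_1 \cap S_3 &= \{x_1 = 0,\ x_2 \leq 0\} \cap S^k, \\
S_2 \cap S_3 &= \{x_1 \leq 0,\ x_2 = 0\} \cap S^k,
\end{align*}
each of which is a closed hemisphere of an equatorial $(k-1)$-sphere, hence diffeomorphic to $B^{k-1}$. The triple intersection is $\{x_1 = x_2 = 0\} \cap S^k = S^{k-2}$ by inspection. The argument for the standard trisection of $B^k$ is formally identical, except that the triple intersection becomes $\{x_1 = x_2 = 0\} \cap B^k = B^{k-2}$, and each hemispherical piece is replaced by the corresponding half-disk.

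The only genuine technical point, which I expect to be the main obstacle worth addressing, is corner smoothing: each piece naturally carries corners along the preimage under $\pi$ of $\partial D^r_i$, so one must verify that after smoothing these corners the abstract smooth diffeomorphism type is really $B^k$ (or $B^{k-1}$). This is a standard application of corner-smoothing for codimension-$0$ submanifolds cut out by transversely intersecting smooth hypersurfaces, and applies uniformly across all the pieces.
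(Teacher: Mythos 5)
The paper states Lemma~\ref{lemma:std} without proof, treating it as a direct computation, and your coordinate-by-coordinate verification with the rectangular model (which the paper explicitly declares equivalent to the symmetric one) is precisely the computation the authors leave implicit. Your identification of each $S_i$, $S_i \cap S_j$, and the central submanifold is correct, and you rightly flag corner smoothing as the only technical point; the argument is sound.
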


\begin{proposition}
Let $\T_{std}$ be the standard trisection of $S^4$ and $\M_{std}$ be the standard trisection of $B^5$.
\begin{enumerate}
\item $\T_{std}$ is a trisection of $S^4$,
\item $\M_{std}$ is a trisection of $B^5$, and
\item $\M_{std}$ is strongly compatible with $\T_{std}$.
\end{enumerate}
\end{proposition}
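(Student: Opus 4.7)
The plan is to verify the axioms of Definitions~\ref{tridef4} and~\ref{tridef5} directly, leaning on Lemma~\ref{lemma:std} to identify every piece of both trisections and on the rectangular disk trisection $D=D^r_1\cup D^r_2\cup D^r_3$ to supply the set-theoretic conditions. Writing $S_i=\pi^{-1}(D^r_i)\cap S^4$ and $D_i=\pi^{-1}(D^r_i)\cap B^5$, the covering equalities $S^4=S_1\cup S_2\cup S_3$, $B^5=D_1\cup D_2\cup D_3$ and the intersection identities $S_i\cap S_j=\boundary S_i\cap\boundary S_j$, $D_i\cap D_j=\boundary D_i\cap\boundary D_j$ are immediate, since preimage commutes with unions and intersections.

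For part~(1), Lemma~\ref{lemma:std} identifies each $S_i\cong B^4\cong \natural_0(S^1\times B^3)$ as a genus-$0$ $4$-dimensional $1$-handlebody, each $S_i\cap S_j\cong B^3$ as a genus-$0$ $3$-dimensional $1$-handlebody, and the triple intersection as the closed oriented genus-$0$ surface $S^2$; this exhausts Definition~\ref{tridef4}. For part~(2), Lemma~\ref{lemma:std} likewise gives $D_i\cong B^5$, $D_i\cap D_j\cong B^4$, and $D_1\cap D_2\cap D_3\cong B^3$. The regular-neighborhood-of-$2$-skeleton condition on $D_i\cap D_j$ is automatic because $B^4$ is a regular neighborhood of any interior point (its $0$-skeleton, hence its $2$-skeleton), and the central $B^3$ is properly embedded since $\boundary B^3=B^3\cap\boundary B^5=S^2$. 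The boundary condition in Definition~\ref{tridef5} is then immediate: both $\M_{std}$ and $\T_{std}$ are pulled back by the same coordinate projection $\pi:\R^5\to\R^2$, so $D_i\cap S^4=S_i$, and part~(1) certifies that the induced decomposition of $\boundary B^5$ is a trisection.

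For part~(3), strong compatibility requires each inclusion $S_i\hookrightarrow D_i$ to send a core of $S_i\cong B^4$ to a core of $D_i\cong B^5$. Since both pieces have $k_i=0$, each core is a single point and the condition holds trivially.

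The only subtle step — rather than a genuine obstacle — is that the pieces $S_i$, $D_i$ and their intersections are naturally manifolds with corners, with corners along the preimages of the coordinate axes in $\R^2$ and where those preimages meet $\boundary B^5$. A standard corner-rounding promotes them to smoothly embedded submanifolds as required by the definitions, without altering any of the diffeomorphism types or incidence relations invoked above; this bit of housekeeping is the step that, if any, needs the most care.
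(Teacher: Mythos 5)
The paper states this proposition without a written proof, treating it as an immediate consequence of Lemma~\ref{lemma:std} together with the definitions. Your argument is exactly that intended deduction, carried out carefully: you read off the diffeomorphism types of all sectors and intersections from Lemma~\ref{lemma:std}, check them against Definitions~\ref{tridef4} and~\ref{tridef5} (including the regular-neighborhood and proper-embedding conditions, which are vacuous or immediate for balls), observe that both standard trisections are pulled back by the same coordinate projection so the boundary restriction is on the nose, and note that strong compatibility is trivial since the cores of $B^4$ and $B^5$ are single points. Your closing remark on corner-rounding is a legitimate housekeeping observation the paper leaves implicit. This is correct and matches the paper's intended route.
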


The {\it standard trisection} of $S^1 \times S^3$ is obtained from the standard trisection of $S^3$ by taking the product of each sector with $S^1$.  It is clear from Lemma~\ref{lemma:std} that this is a trisection.

\subsection{Trisection stabilization}

\begin{definition}
Let $\T=(X_1,X_2,X_3)$ be a $(g;k_1,k_2,k_3)$-trisection of a $4$-manifold $X$. We say that a $(g+1;k_1+1,k_2,k_3)$-trisection $\T'$ of $X$ is {\emph{an elementary $1$-stabilization of $\T$}} if there exists a boundary-parallel arc $C$ properly embedded in $X_2\cap X_3$ so that
\begin{align*}
X'_1&=X_1\cup\overline{\nu(C)},\\
X'_2&=X_2\setminus\nu(C),\\
X'_3&=X_3\setminus\nu(C),
\end{align*}
for some fixed open neighborhood $\nu(C)$ of $C$. We say that $C$ is the {\emph{stabilization arc}} of the stabilization $\T\mapsto \T'$.

We similarly define elementary $2$- and $3$-stabilization by permuting the indices $1,2,$ and $3$.

\end{definition}

\begin{lemma}\label{stabilize}
Let $X$ be a closed $4$-manifold. Fix a $(g;k_1,k_2,k_3)$-trisection $\T=(X_1,X_2,X_3)$ of $X$. Let $\T'$ be an elementary $1$-stabilization of $\T$, so that $\T'=(X'_1,X'_2,X'_3)$ is a $(g+1;k_1+1,k_2,k_3)$-trisection of $X$.  There exists a smooth multisection $\M=(Y_1,Y_2,Y_3)$ of $X \times I$ whose restriction to $X \times \{0\}$ is $\T$ and whose restriction to $X \times \{1\}$ is $\T'$.
\end{lemma}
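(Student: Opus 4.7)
The plan is to decompose $X \times I = X \times [0, 1/2] \cup X \times [1/2, 1]$, trisect each slab separately, and glue the two trisections by Lemma~\ref{gluinglemma}. On the lower slab I would use the product trisection $Y_i^- = X_i \times [0, 1/2]$, which is manifestly a trisection and is strongly compatible with $\T$ on both boundary components.

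On the upper slab I would build an interpolating trisection that realizes the stabilization. Choose a smooth one-parameter family of open neighborhoods $\{\nu_s(C)\}_{s \in [0,1]}$ of $C$ in $X$ with $\nu_0(C) = \emptyset$ and $\nu_1(C) = \nu(C)$, modeled near $\boundary C \subset \Sigma$ on the standard local trisection of $\mathbb{R}^4$. Pick a smooth monotone $\rho : [1/2, 1] \to [0,1]$ that equals $0$ near $1/2$ and $1$ near $1$, and set
\begin{align*}
Y_1^+ &= \{(x,t) \in X \times [1/2,1] : x \in X_1 \cup \overline{\nu_{\rho(t)}(C)}\}, \\
Y_2^+ &= \{(x,t) \in X \times [1/2,1] : x \in X_2 \setminus \nu_{\rho(t)}(C)\}, \\
Y_3^+ &= \{(x,t) \in X \times [1/2,1] : x \in X_3 \setminus \nu_{\rho(t)}(C)\}.
\end{align*}
By construction the time-$1/2$ and time-$1$ slices recover $\T$ and $\T'$, and the plateaux of $\rho$ near the endpoints make the slab product-like near both boundary levels, yielding strong compatibility.

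Each $Y_i^+$ would then be identified as a trace cobordism. For $Y_1^+$: since $\overline{\nu(C)}$ is attached to $X_1$ as a 4-dimensional 1-handle along two 3-balls in $\boundary X_1$ neighboring $\boundary C \subset \Sigma$, the cobordism $Y_1^+$ from $X_1$ to $X_1'$ is diffeomorphic to $X_1 \times I$ with a 5-dimensional 1-handle attached, so $Y_1^+ \cong \natural_{k_1+1} S^1 \times B^4$. For $Y_2^+$ and $Y_3^+$: the excised piece $\nu_{\rho(t)}(C) \cap X_i$ is a collar of the boundary-parallel arc $C \cap X_i$, so the cobordism is a product and $Y_i^+ \cong X_i \times I \cong \natural_{k_i} S^1 \times B^4$. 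Analogous trace-cobordism arguments, using that each $X_i \cap X_j$ changes under the 4-dimensional stabilization only by a 1-handle attachment or collar removal, show that each double intersection $Y_i^+ \cap Y_j^+$ is a 4-dimensional $0$-,$1$-,$2$-handlebody, and that $Y_1^+ \cap Y_2^+ \cap Y_3^+$ is a 3-manifold cobordism from $\Sigma$ to $\Sigma'$. Applying Lemma~\ref{gluinglemma} to the two slabs, which agree on $X \times \{1/2\}$ via $\T$, then produces the desired $\M$.

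The main obstacle is the local analysis of $\{\nu_s(C)\}$ near $\boundary C \subset \Sigma$, where all three sectors meet. One must choose the family compatibly with a trisected chart modeled on the standard trisection of $\mathbb{R}^4$ so that $\overline{\nu_s(C)} \cap X_1$ consists of two 4-balls (the 1-handle attaching regions) while $\overline{\nu_s(C)} \cap X_i$ for $i = 2, 3$ is a boundary collar of $C \cap X_i$. With this local normal form in place, the region added to $Y_1^+$ is genuinely a 5-dimensional 1-handle, the cobordisms $Y_2^+$ and $Y_3^+$ are products, and the remaining trisection conditions follow routinely.
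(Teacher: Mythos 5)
Your construction follows essentially the same line as the paper's, but realizes the transition from $\T$ to $\T'$ in a different and, as stated, problematic way. The paper simply sets $Y_i := (X_i \times [0, 1/2]) \cup (X'_i \times [1/2, 1])$, so the extra handle of $X'_1$ is glued on abruptly at $t = 1/2$, creating corners; it then observes that each $Y_i$ and $Y_i \cap Y_j$ deformation retracts onto its $t = 1/2$ slice, reads off the handlebody structures from that slice, and identifies the central submanifold as the compression body obtained by attaching the $3$-dimensional $1$-handle $\overline{\nu(C) \smallsetminus X_1}$. The corners at $t=1/2$ are smoothed in the standard way and do not change the diffeomorphism type of any sector.

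Your proposal replaces the jump by a smooth interpolation through a shrinking family $\nu_s(C)$, which would pre-smooth the corners. The problem --- and it is more pervasive than the local model near $\boundary C$ you flag --- is that no smooth family with $\nu_0(C) = \emptyset$ can exist: a tubular neighborhood of the arc $C$ degenerates at best to $C$ itself (codimension $3$), and the set $\{(x,t) : x \in \overline{\nu_{\rho(t)}(C)}\}$ then fails to be a smooth $5$-manifold-with-boundary all along the ``birth'' level $C \times \{t_0\}$, where the boundary pinches onto the arc. This is not a corner fixable by rounding but a genuine degeneration. The cleaner route is exactly the paper's: accept the piecewise definition and round corners afterward, instead of trying to build the interpolation smoothly from the start. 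Your identifications of the resulting pieces as traces --- $Y_1$ as $X_1 \times I$ with a $5$-dimensional $1$-handle, $Y_2$ and $Y_3$ as products, the double intersections as handlebodies, the triple intersection as a cobordism from $\Sigma$ to $\Sigma'$ --- are correct and agree with what the paper extracts from the $t = 1/2$ slice; so once the corner-smoothing viewpoint replaces the degenerating family, your argument goes through.
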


\begin{proof}
See Figure~\ref{fig:stabilization} for a schematic.

Let $C$ be the stabilization arc of $\T\mapsto\T'$. For $i=1,2,3$, let $Y_i:=(X_i\times[0,1/2])\cup(X'_i\times[1/2,1])$, so that $Y=Y_1\cup Y_2\cup Y_3$. We immediately have $Y_i\cap(X\times 0)=X_i$ and $Y_i\cap(X\times 1)=X_1$. Oviously this implies that $\M=(Y_1,Y_2,Y_3)$ induces a trisection on $\boundary Y$.

Note that for each $i$ and $j$, $Y_i$ and $Y_i\cap Y_j$ strongly deformation retract onto $Y_i\cap (X\times 1/2)$ and $(Y_i\cap Y_j)\cap (X\times 1/2)$, respectively. We will describe each of these intersections.

\begin{align*}
Y_1\cap (X\times 1/2)&=X'_1\times 1/2,\\
Y_2\cap (X\times 1/2)&=X_2\times 1/2,\\
Y_3\cap (X\times 1/2)&=X_3\times 1/2.\\
\end{align*}

We conclude that $Y_1\cong \natural_{k_1+1}S^1\times B^4$,  $Y_2\cong \natural_{k_2}S^1\times B^4$, and  $Y_1\cong \natural_{k_3}S^1\times B^4$. Moreover,

\begin{align*}
Y_1\cap Y_2\cap (X\times 1/2)&=((X_1 \cap X_2)\cup(X_2\cap\overline{\nu(C)}))\times 1/2,\\
Y_2\cap Y_3\cap (X\times 1/2)&=(X_2\cap X_3)\times1/2,\\
Y_3\cap Y_1\cap (X\times 1/2)&=((X_1 \cap X_3)\cup(X_3\cap\overline{\nu(C)}))\times 1/2.\\
\end{align*}

Then immediately, $Y_2\cap Y_3\cong\natural_g S^1\times B^3$. Moreover, we note that $Y_1\cap Y_2\cap( X\times 1/2)$ is obtained from the $3$-dimensional handlebody $(X_1\cap X_2)\times 1/2$ by attaching a $4$-dimensional $1$-handle, so strongly deformation retracts to a $1$-skeleton. Therefore, $Y_1\cap Y_2\cong\natural_{g+1} S^1\times B^3$. Similarly, $Y_3\cap Y_1\cong\natural_{g+1} S^1\times B^3$.

Finally, we have that $Y_1\cap Y_2\cap Y_3$ is the $3$-dimensional trace of the cobordism from $X_1\cap X_2\cap X_3$ to $X'_1\cap X'_2\cap X'_3$ obtained by attaching the $3$-dimensional $1$-handle $\overline{(\nu(C)\smallsetminus X_1)}$. That is, $Y_1\cap Y_2\cap Y_3$ is a compression body from a genus $(g+1)$-surface to a genus $g$-surface. 

\end{proof}

We will refer to the trisected $5$-manifold $Y$ of Lemma~\ref{stabilize} as a stabilization cobordism. Figure~\ref{fig:stabilization} shows a schematic of a stabilization cobordism.

\begin{figure}
\includegraphics[width=.7\textwidth]{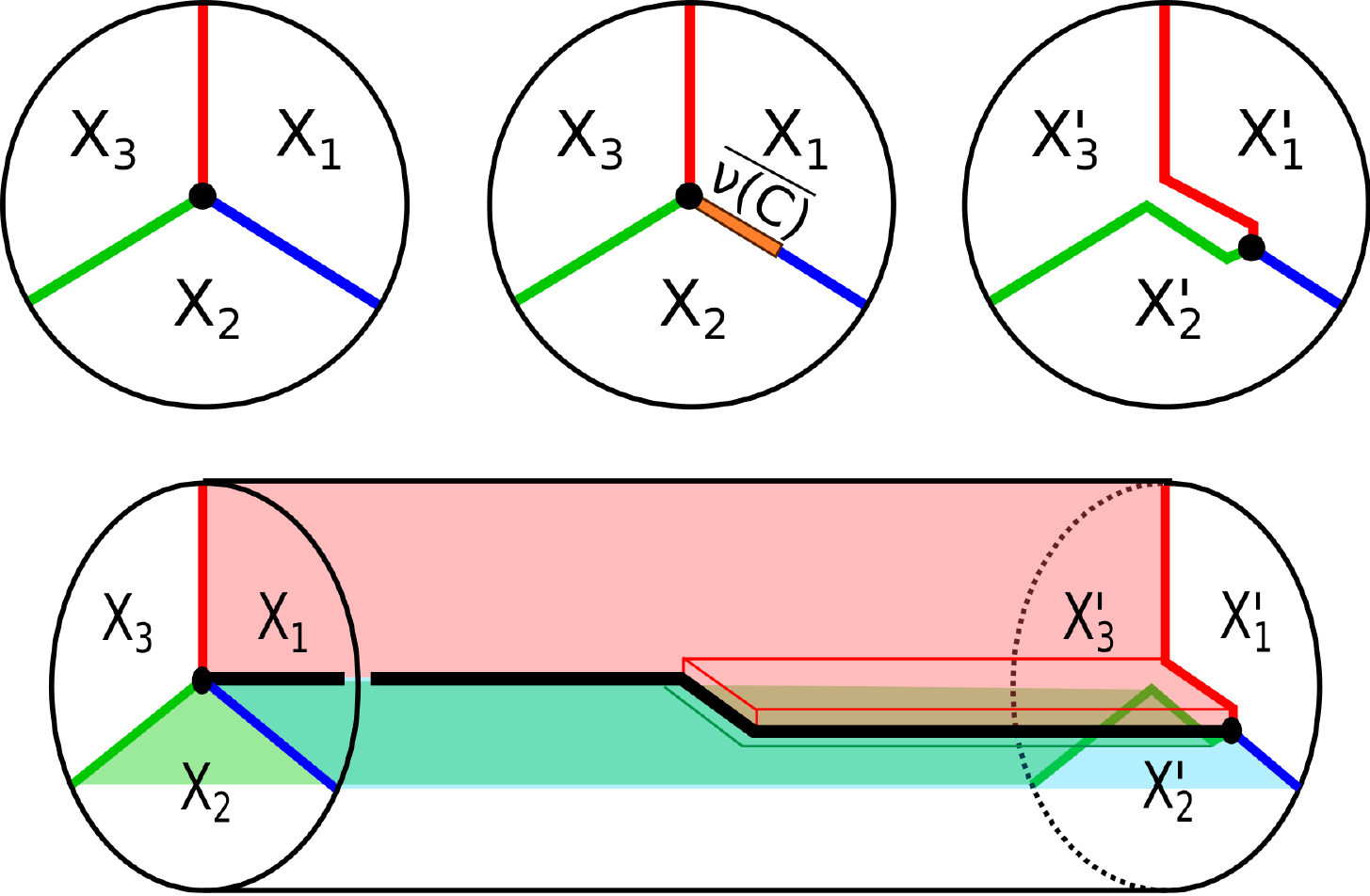}
\caption{A schematic of a stabilization cobordism corresponding to an elementary $1$-stabilization about arc $C\subset X_2\cap X_3$. This is a cobordism from $X$ to $X$ inducing trisection $\T$ on the left boundary and $\T'$ on the right boundary (where $\T'$ is obtained from $\T$ by $1$-stabilization), as in Lemma~\ref{stabilize}. }\label{fig:stabilization}
\end{figure}

%
%We note that $Y_1$ is obtained from $X_1\times I$ by attaching the $5$-dimensional $1$-handle $\overline{\nu(C))\times I$. Therefore, $Y_1\cong \natural_{k_1+1} S^1\times B^4$.
%
%Moreover, $Y_2$ is obtained from $X_2\times I$ by deleting a neighborhood of the disk $C\times [1/2,1]$, which lies in the boundary of $X_2\times I$. Therefore, $Y_2\cong X_2\times I\cong\natural_{k_2} S^1\times B^4$. By a simlar argument, $Y_3\cong X_3\times I\cong\natural_{k_2} S^1\times B^4$.
%
%The manifold $Y_1\cap Y_2$ strongly deformation retracts to $(Y_1\cap Y_2)

\begin{proposition}
\label{prop:change-trisection}
Let $X$ be a closed, oriented, smooth 4-manifold and let $\T_0,\T_1$ be trisections of $X$.  There exists a trisection $\M=(Y_1, Y_2, Y_3)$ of $X \times [0,1]$ whose restriction to $X \times \{i\}$ is $\T_i$.
\end{proposition}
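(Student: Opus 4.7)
The plan is to reduce this to the cobordisms already built and then stack them. By the Gay--Kirby uniqueness theorem for trisections of closed $4$-manifolds, any two trisections have a common stabilization up to ambient isotopy; so we obtain sequences of elementary stabilizations $\T_0 \to \T_0^{(1)} \to \cdots \to \T_0^{(n)}$ and $\T_1 \to \T_1^{(1)} \to \cdots \to \T_1^{(m)}$ of trisections of $X$ with $\T_0^{(n)}$ ambient isotopic to $\T_1^{(m)}$.

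For each elementary stabilization $\T^{(j)} \to \T^{(j+1)}$ in either sequence, Lemma~\ref{stabilize} produces a trisected cobordism structure on $X \times I$ realizing those two trisections on the two boundary copies of $X$. For the final ambient isotopy $\phi_t$ carrying $\T_0^{(n)}$ to $\T_1^{(m)}$, I construct a trisection cobordism on $X \times I$ directly by setting $Y_i := \bigcup_{t \in [0,1]} \phi_t(X_i^{(n)}) \times \{t\}$; the diffeomorphism $Y_i \cong X_i^{(n)} \times I$, together with the analogous descriptions of the double and triple intersections as products, makes this manifestly a trisection that is strongly compatible with both boundary trisections.

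I then stack all of these cobordisms end-to-end in the obvious order (stabilization cobordisms from $\T_0$ up to $\T_0^{(n)}$, the isotopy cobordism, then stabilization cobordisms run in reverse from $\T_1^{(m)}$ back down to $\T_1$), applying Lemma~\ref{gluinglemma} at each interface. After reparametrizing the $I$-direction to have total length $1$, the result is a trisection of $X \times I$ whose restrictions to the two boundary components are exactly $\T_0$ and $\T_1$.

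The main obstacle is verifying strong compatibility at both boundaries of each stabilization cobordism of Lemma~\ref{stabilize}. On the stabilized end the cores match exactly under the deformation retract $Y_i \to X_i' \times \{1/2\}$; on the unstabilized end the sector $Y_1$ has one more $1$-handle than $X_1$ (coming from the thickening of the stabilization arc $C$), so a core of $X_1$ only extends to, rather than equals, a core of $Y_1$. Confirming that this extension form of strong compatibility is what Lemma~\ref{gluinglemma} actually needs for its handle-cancellation argument — and hence that consecutive stabilization cobordisms glue to a bona fide trisection — is the key technical point; once established, the remaining conditions of Definition~\ref{tridef5} follow routinely from the corresponding statements for each cobordism in the stack.
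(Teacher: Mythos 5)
Your proposal is correct and follows essentially the same route as the paper: invoke the Gay--Kirby uniqueness theorem to get a common stabilization of $\T_0$ and $\T_1$, realize each elementary stabilization as a trisected product cobordism via Lemma~\ref{stabilize}, and stack the pieces with Lemma~\ref{gluinglemma}. The paper's proof is terse and you correctly supply two points it glosses over: the explicit product trisection $Y_i = \bigcup_t \phi_t(X_i^{(n)})\times\{t\}$ realizing the residual ambient isotopy between the two common stabilizations, and the observation that on the unstabilized end of a stabilization cobordism ``strong compatibility'' must be read as the core of $X_i$ \emph{extending to} (rather than literally equaling) a core of $Y_i$ --- which is indeed the form of the hypothesis actually used in the handle-cancellation step of Lemma~\ref{gluinglemma}.
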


\begin{proof}
By~\cite[Theorem 11]{gaykirby}, there exists a common stabilization $\widetilde{\T}$ of $\T_0$ and $\T_1$. Therefore, the claim holds by induction on Lemmas~\ref{stabilize} and~\ref{gluinglemma}
\end{proof}

\subsection{Morse theory for manifolds with boundary}

For a comprehensive treatment, we refer the reader to~\cite{BNR}.

Let $f$ be a Morse function on $X$; for the sake of exposition we assume the critical values are all distinct.  By the Morse lemma, we can choose coordinates around every nondegenerate critical point of $f$ in which the function takes the form
\begin{equation}
\label{eq:Morse}
f(x_1,\dots,x_n) = x_1^2 + \dots + x_{n - k}^2 - x_{n - k +1}^2 - \dots - x_{n}^2
\end{equation}
for some $k$, which is called the {\it index} of the critical point.  Let $X_s = f^{-1}((\infty,s])$.  Up to diffeomorphism, the sublevel set $X_{\epsilon}$ can be obtained from the sublevel set $X_{-\epsilon}$ by attaching a $k$-handle along some $S^{k-1}$ in the level set $f^{-1}(-\epsilon)$.

Now suppose that $X$ has nonempty boundary and $f$ is a Morse function on $X$ that restricts to a Morse function on $\partial X$.  If $z \in \partial X$ is a critical point of $f$, we can find Morse coordinates near $z$ as in Equation~\ref{eq:Morse} and such that $\partial X$ is sent to the hyperplane $\{x_j = 0\}$ for some $j$.  The critical point $z$ is {\it boundary unstable} if $1 \leq j \leq n - k$ and is {\it boundary stable} if $n - k + 1 \leq j \leq n$.  

The topological change to a sublevel set when $f$ has a Morse critical point on the boundary depends on whether the critical point is boundary stable or boundary unstable.

\begin{proposition}
\label{prop:Morse-boundary}
Let $z \in \partial X$ be a Morse critical point of index-$k$.
\begin{enumerate}
\item If $z$ is boundary stable, then $X_{\epsilon}$ is diffeomorphic to $X_{-\epsilon}$.  Furthermore, $(\partial X)_{\epsilon}$ is obtained from $(\partial X)_{-\epsilon}$ by attaching a handle of index-$k - 1$.
\item If $z$ is boundary unstable, then $X_{\epsilon}$ is obtained from $X_{-\epsilon}$ by attaching a handle of index-$k$.  Furthermore, $(\partial X)_{\epsilon}$ is obtained from $(\partial X)_{-\epsilon}$ by attaching a handle of index-$k$.
\end{enumerate}
\end{proposition}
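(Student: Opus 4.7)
The plan is to localize the change in the sublevel sets to a small neighborhood of $z$ using a gradient-like vector field that respects $\partial X$, and then read off the topology of the change in an explicit local model. First I would build a gradient-like vector field $V$ for $f$ on $X$ with the following properties: (a) $V$ vanishes only at critical points of $f$; (b) on a small ball $U$ around $z$, $V = -\nabla f$ in the Morse coordinates given by Equation~\ref{eq:Morse}; (c) outside $U$, $V$ is tangent to $\partial X$ and its restriction is gradient-like for $f|_{\partial X}$. Such a $V$ exists by the usual partition-of-unity construction, starting from a gradient-like field for $f|_{\partial X}$ on $\partial X$, extending it inward, and patching with $-\nabla f$ on $U$. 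Integrating $V$ gives diffeomorphisms $X_\epsilon \smallsetminus U \cong X_{-\epsilon} \smallsetminus U$ and $(\partial X)_\epsilon \smallsetminus U \cong (\partial X)_{-\epsilon} \smallsetminus U$, so all topological change happens inside $U$ and it suffices to compare the two sublevel sets inside the local half-space model
\[
H = \{(x_1,\dots,x_n) : x_j \leq 0\}, \qquad f = x_1^2 + \cdots + x_{n-k}^2 - x_{n-k+1}^2 - \cdots - x_n^2.
\]

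Next I would split into the two cases. In the boundary stable case, $x_j$ is one of the descending coordinates, so write $f = |u|^2 - |v|^2 - t^2$ with $u = (x_1,\dots,x_{n-k})$, $v$ collecting the remaining negative coordinates, and $t = x_j \leq 0$. On $\partial X = \{t = 0\}$ the function restricts to $|u|^2 - |v|^2$, a Morse form of index $k-1$, so closed Morse theory on $\partial X$ gives the boundary claim of (1). For the ambient claim, I would observe that the model sublevel sets $\{|v|^2 + t^2 \geq |u|^2 \pm \epsilon,\ t \leq 0\}$ differ only by a radial rescaling in $(v,t)$ that preserves $\{t \leq 0\}$ and equals the identity outside $U$, so $X_\epsilon \cong X_{-\epsilon}$. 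In the boundary unstable case, $x_j$ is an ascending coordinate; writing $f = s^2 + |u|^2 - |v|^2$ with $s = x_j \leq 0$, the restriction $f|_{\partial X} = |u|^2 - |v|^2$ is Morse of index $k$, proving the boundary claim of (2). For the ambient claim, the descending set of $z$ in $H$ is a $k$-dimensional half-disk with boundary sphere $S^{k-1} \subset \partial X_{-\epsilon}$, and its neighborhood in $H$, after corner-smoothing, is a standard $n$-dimensional $k$-handle attached to $X_{-\epsilon}$ along this $S^{k-1}$; the rest of $H$ is swept back to $X_{-\epsilon} \cap H$ by the flow of $V$.

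The main obstacle I expect is the corner-smoothing in case (2): the half-space model $H$ naturally produces a ``half-handle'' whose attaching region is a disk in $\partial X_{-\epsilon}$ together with a piece of $f^{-1}(-\epsilon)$ meeting it along a corner, and one has to verify that after rounding this corner the attachment is a genuine smooth index-$k$ handle in the usual sense while simultaneously attaching a smooth index-$k$ handle to $\partial X$. This is precisely the local content of Morse theory on manifolds with boundary worked out in \cite{BNR}, so my intended proof would invoke their local model theorem rather than redo the smoothing by hand.
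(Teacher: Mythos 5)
Your overall strategy matches the paper's, which simply cites the local-model results of \cite{BNR} (Lemmas 2.18, 2.19, Theorem 2.27) for the ambient claims and notes the boundary claims are standard; you reconstruct what those results say, identify the index of $f|_{\partial X}$ correctly in both cases, and defer to \cite{BNR} for corner-smoothing. Two of your intermediate claims, however, are not quite right.

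First, the ``radial rescaling'' argument for the ambient statement in case (1) does not work as stated. In your model $f = |u|^2 - |v|^2 - t^2$ with $t \leq 0$, the fiber of $X_\epsilon$ over a point with $|u|^2 < \epsilon$ is the \emph{entire} half-space $\{t \leq 0\}$ in $(v,t)$-coordinates (since $|u|^2 - \epsilon < 0$ makes the constraint vacuous), whereas the fiber of $X_{-\epsilon}$ over the same $u$ is the complement of an open half-ball and in particular omits $(v,t)=(0,0)$. Any radial rescaling in $(v,t)$ fixes $(v,t)=(0,0)$, so it cannot carry the first fiber onto the second. The spaces are in fact diffeomorphic, but the diffeomorphism has to push points away from the origin in a way that is not a radial scaling; this is exactly the content of the relevant lemma in \cite{BNR}, and you cannot shortcut it the way you propose.

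Second, in case (2) you describe the descending set of $z$ as ``a $k$-dimensional half-disk.'' This is backwards. In the boundary unstable case the boundary coordinate $x_j$ is one of the \emph{ascending} coordinates ($1 \leq j \leq n-k$), so the descending manifold $\{x_1 = \cdots = x_{n-k} = 0\}$ satisfies $x_j = 0$ identically and therefore lies entirely inside $\partial X$. It is a full $k$-disk in $\partial X$ with boundary $S^{k-1}$, not a half-disk (a half-disk would not have $S^{k-1}$ as its boundary, so your own sentence is internally inconsistent). It is the \emph{ascending} manifold that meets $\partial X$ transversely and appears as a half-disk in $X$, and the corner you must smooth runs along $\partial X \cap f^{-1}(-\epsilon)$ around that full $k$-disk. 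This does not ultimately change the conclusion, since you invoke \cite{BNR} for the smoothing, but the picture you give of where the corner is and what the core of the handle looks like is incorrect.
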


\begin{proof}
The statements about the topology of $\partial X$ are standard Morse theory.  The statements about the topology of $X$ are the combination of Lemmas 2.18 and 2.19 and Theorem 2.27 in~\cite{BNR}.
\end{proof}

\subsection{Index-1}

In this and the following subsections, let $Y$ be a cobordism between closed $4$-manifolds $X$ and $Z$; let $\T_X=(X_1,X_2,X_3)$ and $\T_Z=(Z_1, Z_2, Z_3)$ be trisections of $X$ and $Z$, respectively; and let $f:Y\to [0,1]$ be a relative Morse function so that $\boundary Y=f^{-1}(\{0,1\})$, where $X=f^{-1}(0)$ and $Z=f^{-1}(1)$. %We say that the multisection $\M$ {\it restricts to $\T_X$} if $Y_i\cap f^{-1}(0)=X_i$ for each $i=1,2,3$.  Similarly, it {\it restricts to $\T_Z$} if $Y_i\cap h^{-1}(1)=Z_i$ for each $i=1,2,3$.

Suppose that $S$ is an embedded $S^0$ in the central surface $\Sigma$ of the trisection $\T_X$.  Let $\nu_{\Sigma}(S)$ be a tubular neighborhood  in the central surface.  Then we can choose a tubular neighborhood $\nu_X(S) \cong \nu_{\Sigma}(S) \times D^2$ such that $\T_X$ restricts to a trisection obtained by pulling back the standard (which we view to be the rectangular) trisection of the disk under the projection $\nu_X(S) \rightarrow D^2$.  Note that the trisection determines a framing of the bundle $\nu_{\Sigma}(S) \times D^2$, but this framing is unique up to isotopy since $\nu_{\Sigma}(S) \cong S^0 \times D^2$.

\begin{proposition}
\label{prop:index1}
Suppose that there is a unique  critical point of $f$ of index-$1$ in the interior of $Y$. There exists a trisection $\M=(Y_1, Y_2, Y_3)$ of $Y$ that is strongly compatible with the trisections $\T_X$ and $\T_Z$.
\end{proposition}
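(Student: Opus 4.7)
The plan is a two-stage construction: first I will build a preliminary trisection $\M^0$ of $Y$ by extending $\T_X\times I$ across the unique $5$-dimensional $1$-handle (placing the handle entirely into sector $1$); then I will reconcile the resulting trisection on $Z$ with the prescribed $\T_Z$ via a change-of-trisection cobordism applied at the top.

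Since $f$ has a single index-$1$ critical point in the interior of $Y$, there is a diffeomorphism $Y \cong X\times I \cup h^1$ with $h^1 \cong D^1\times D^4$ attached along $S^0\times D^4 \subset X\times\{1\}$. I would first use an ambient isotopy in $X$ to move the attaching sphere $S$ into the central surface $\Sigma_X = X_1\cap X_2\cap X_3$; this is immediate since $S$ is $0$-dimensional. By the paragraph preceding the proposition, we then obtain a tubular neighborhood $\nu_X(S) \cong S^0\times D^2\times D^2$ on which $\T_X$ is pulled back from the rectangular trisection on the last $D^2$. I would then extend this trisection over $h^1 \cong D^1\times D^2\times D^2$ as the product trisection on the final $D^2$ factor, and set
\[
Y^0_1 = X_1\times I \cup h^1, \qquad Y^0_2 = X_2\times I, \qquad Y^0_3 = X_3\times I.
\]

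The trisection axioms for $\M^0 = (Y^0_1,Y^0_2,Y^0_3)$ would follow from direct verification. Sector $Y^0_1$ is $X_1\times I$ (a $5$-dimensional $1$-handlebody) with $h^1 \cong B^5$ attached along the sector-$1$ piece $S^0\times D^2\times D^r_1 \cong S^0\times B^4$ of $\partial h^1$; this is a standard $5$-dimensional $1$-handle attachment, so $Y^0_1 \cong \natural_{k_1+1}S^1\times B^4$, while the other two sectors are unchanged products and remain $1$-handlebodies. Each double intersection $Y^0_i \cap Y^0_j$ is $(X_i\cap X_j)\times I$ possibly with a sector-piece of $h^1$ glued along a $3$-ball on its boundary, which preserves the $4$-dimensional $0$-,$1$-,$2$-handlebody type. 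The central $3$-manifold is $\Sigma_X\times I$ with a single $3$-dimensional $1$-handle attached at $S\subset \Sigma_X\times\{1\}$, a compression body between closed surfaces. The restriction of $\M^0$ to $X\times\{0\}$ is $\T_X$ and is strongly compatible (cores of the $X_i$ sit on the bottom boundary); let $\T_Z^0$ denote the induced, possibly non-matching, trisection on the top.

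To replace $\T_Z^0$ by the prescribed $\T_Z$, I would invoke Proposition~\ref{prop:change-trisection} to obtain a trisection $\M'$ of $Z\times I$ restricting to $\T_Z^0$ and $\T_Z$ at the two ends, and then apply Lemma~\ref{gluinglemma} to concatenate $\M^0$ with $\M'$. The resulting trisection of $Y \cong (X\times I \cup h^1) \cup_Z (Z\times I)$ will be strongly compatible with both $\T_X$ and $\T_Z$. The main obstacle I anticipate is the handlebody-type verification for $\M^0$: although $h^1$ lies entirely in sector~$1$, its attaching region $\nu_X(S)$ meets all three sectors on $X$, so the double and triple intersections inherit extra pieces of $\partial h^1$ that must be carefully tracked and shown not to disturb the required topological type. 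A secondary technical point is ensuring that the change-of-trisection cobordism produced by Proposition~\ref{prop:change-trisection} is strongly compatible with both its boundary trisections, as this is required for Lemma~\ref{gluinglemma} to apply.
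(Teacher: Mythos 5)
Your two-stage plan (extend over the handle, then correct the induced trisection on $Z$ via Proposition~\ref{prop:change-trisection} and Lemma~\ref{gluinglemma}) is exactly the paper's strategy. But the key construction in the first stage is wrong as written, and the proposal is internally inconsistent about what it is.

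The formulas
\[
Y^0_1 = X_1\times I \cup h^1, \qquad Y^0_2 = X_2\times I, \qquad Y^0_3 = X_3\times I
\]
place the entire handle $h^1\cong D^1\times D^4$ into sector~$1$. This does not give a trisection. To see why, compute the top piece $Y^0_1\cap Z$: it is $(X_1\setminus\nu_X(S))$ with the entire free boundary $D^1\times S^3$ of $h^1$ glued on, and the gluing occurs only along $S^0\times(S^3\cap X_1)\cong S^0\times B^3$ (two $3$-balls), since the rest of $\partial(D^1\times S^3)$ faces $X_2$ and $X_3$. Up to homotopy this is $\bigl(\bigvee_{k_1+1}S^1\bigr)\vee S^3$, which has nontrivial $\pi_3$; it is therefore not a $4$-dimensional $1$-handlebody $\natural S^1\times B^3$, so the restriction of $\M^0$ to $Z$ fails to be a trisection. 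Relatedly, your own description of the central submanifold as ``$\Sigma_X\times I$ with a single $3$-dimensional $1$-handle attached'' is inconsistent with the displayed formulas: with all of $h^1$ in $Y^0_1$, the triple intersection is $\Sigma_X\times I$ with two collar $3$-balls attached, i.e.\ a trivial product, not a genuine compression body.

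The correct construction --- which you gesture at with ``extend this trisection over $h^1\cong D^1\times D^2\times D^2$ as the product trisection on the final $D^2$ factor'' but then do not actually implement --- is to trisect the handle \emph{radially}: in Morse coordinates $f = x_1^2+x_2^2+x_3^2+x_4^2-x_5^2$ one pulls back the rectangular trisection of the disk under $(x_1,\dots,x_5)\mapsto(x_1,x_2)$, so each $Y_i$ receives a wedge $D^1\times D^2\times D^r_i$ of the handle. With that choice the central submanifold is $\Sigma\times I$ plus a genuine $3$-dimensional $1$-handle, each $Y_i$ is $X_i\times I$ plus a $5$-dimensional $1$-handle, each double intersection gains a $4$-dimensional $1$-handle, and the induced trisection $\T'_Z$ on $Z$ is honestly a trisection; strong compatibility is then clear and the rest of your plan (invoking Proposition~\ref{prop:change-trisection} and Lemma~\ref{gluinglemma}) goes through. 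Your secondary worry about whether Proposition~\ref{prop:change-trisection} gives a strongly compatible cobordism is justified but is handled in the paper: it is built from the stabilization cobordisms of Lemma~\ref{stabilize}, which are strongly compatible by construction.
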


\begin{proof}
First we describe the local model in Morse coordinates.  Near a Morse critical point of index 1, we have Morse coordinates such that
\[f = x_1^2 + x_2^2 + x_3^2 + x_4^2 - x_5^2\]
We can view this as a function on $\mathbb{R}^2 \times \mathbb{R}^3$ and decompose $f$ as $g + \widetilde{f}$, where
\begin{align*}
g(x_1,x_2) &= x_1^2 + x_2^2 & \widetilde{f}(x_3,x_4,x_5) &= x_3^2 + x_4^2 - x_5^2
\end{align*}
Using the projection $\pi: \mathbb{R}^2 \times \mathbb{R}^3 \rightarrow \mathbb{R}^2$, we obtain a trisection near the Morse critical point by pulling back the rectangular trisection of the disk. In this model, the central submanifold is the hyperplane $\{x_1 = x_2 = 0\}$ and the restriction of $\widetilde{f}$ to the central submanifold is a Morse function with a critical point of index 1. By earlier discussion in this subsection, we can take this trisection to agree with $\T_X$.

Let $g$ be the standard Euclidean metric on $\mathbb{R}^5$ and $\nabla f$ the gradient of $f$ with respect to $g$.  The desending manifold of the critical point, with respect to $\nabla f$, is contained in the line $\{x_1 = x_2 = x_3 = x_4 = 0\}$ and intersects the level set $f^{-1}(-\epsilon)$ in the $0$-sphere $R = (0,0,0,0,\pm \sqrt{\epsilon})$.  Let $\widetilde{R} = (0,0,\pm \sqrt{\epsilon}) \subset \mathbb{R}^3$ be its projection.  Let $\nu(\widetilde{R}) \subset \mathbb{R}^3$ be a tubular neighborhood of $\widetilde{R}$ in $\widetilde{f}^{-1}(-\epsilon)$.  Flowing along $\nabla \widetilde{f}$, we obtain tubular neighborhoods of $(0,0,\pm( \sqrt{\epsilon + \delta})$ in $\widetilde{f}^{-1}(-\epsilon - \delta))$ for all $\delta > 0$.  We can find a tubular neighborhood $\nu(R)$ of $R$ in $f^{-1}(-\epsilon)$ of the form
\[\nu(R) \cong \nu(\widetilde{R}) \times D^2 = \nu(\widetilde{R}) \times g^{-1}([0,\epsilon/2])\]
The trisection of the local model restricts a trisection of $\nu(R)$ obtained by pulling back the rectangular trisection of the disk under the projection $\nu(R) \rightarrow D^2$.

Via an identification
\[ \nu_X(S) \cong \nu_{\Sigma}(S) \times D^2 \cong \nu(\widetilde{R}) \times D^2 \cong \nu(R)\]
we can use this model to extend a trisection from below the critical point to above the critical point. See Figure~\ref{fig:index1}.

\begin{figure}
\includegraphics[width=.5\textwidth]{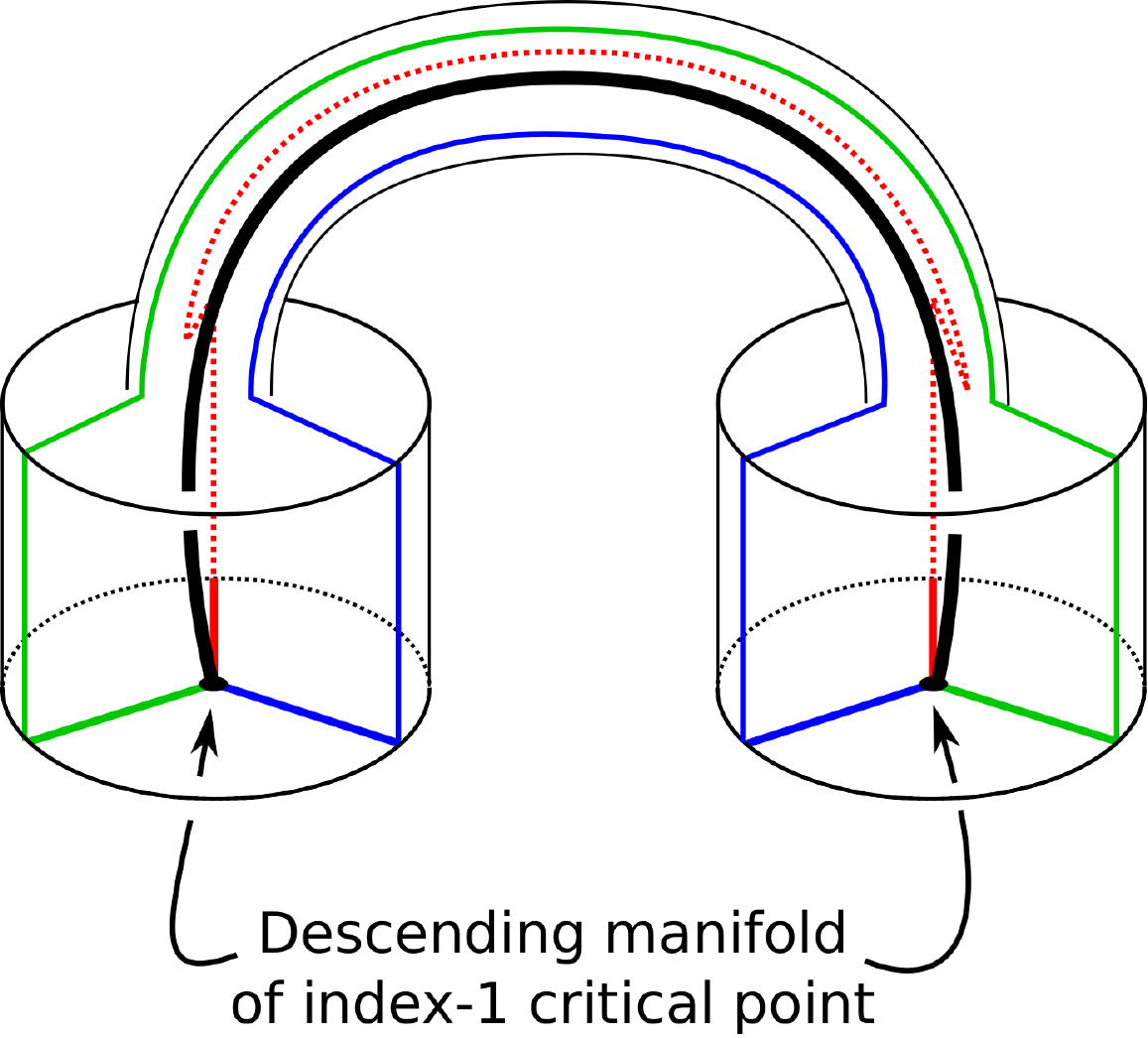}
\caption{The construction of Proposition~\ref{prop:index1}. We trisect a cobordism $Y$ from $X$ to $Z$ which includes an index-$1$ critical point. We isotope so that the descending manifold of the index-$1$ point intersects $X$ in the central surface of a trisection $\T_X$ on $X$. We then trisect the $1$-handle radially.}
\label{fig:index1}
\end{figure}

The topological result is as follows.  In the local model, The central submanifold is the hyperplane $\{x_1 = x_2 = 0\}$ and the function $f$ restricts to a Morse function with a critical point of index-$1$.  Thus, moving from height $-\epsilon$ to height $\epsilon$ results in surgery on $S \subset \Sigma$, increasing the genus by 1.  The double intersection $H_1 = D^r_1 \cap D^r_3$ is $\{x_1 \geq 0; x_2 = 0\}$ and the restriction of $f$ restricts has a boundary unstable critical point of index 1.  Thus, topologically moving from height $-\epsilon$ to height $\epsilon$ adds a 1-handle to $H_1$ along $S$.  By symmetry, this is also true of the remaining double intersections.  Finally, the top dimensional sector $D^r_1$, in the rectangular model of the trisection, is the halfspace $\{x_1 \geq 0\}$.  The restriction of $f$ has a boundary unstable critical point of index 1 and so the topological effect when moving from height $-\epsilon$ to $\epsilon$ is to add a 1-handle to $D^r_1$ along $S$.  Again by symmetry, this is true of the remaining sectors.

The local model gives a trisection $\M'$ on $Y$ that restricts to $\T_X$ on $X$ and some $\T'_Z = (Z'_1,Z'_2,Z'_3)$ on $Z$.  The sector $X_i$ is a 4-dimensional 1-handlebody of genus $k_i$.  The sector $Y_i$ is obtained by thickening $X_i$ and attaching a 1-handle.  Thus clearly the inclusion $X_i \hookrightarrow Y_i$ includes the cores of $X_i$ into the cores of $Y_i$.  Moreover, the sector $Y_i$ retracts onto $Z'_i$.  Thus $\M'$ is strongly compatible.  By Proposition \ref{prop:change-trisection}, we can find a trisection on $Z \times I$ strongly compatible with $\T'_Z$ and $\T_Z$.  Then by Lemma \ref{gluinglemma}, we can glue these together to obtain the required trisection.
\end{proof}

\subsection{Index-2}

\begin{lemma}\label{lemmaindex2}
Let $S$ be some embedded $S^1 \subset X$.  After stabilizing the trisection $\T_X$ and isotopy of $S$, we can assume that $S$ lies in the central surface $\Sigma$ of $\T_X$ so that it includes in each 3D and 4D piece as a core, and that the framing induced on $S$ by $\Sigma$ may be chosen arbitrarily.
\end{lemma}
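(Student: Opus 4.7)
The plan is to proceed in three stages: after stabilization and isotopy, first place $S$ on the central surface $\Sigma$; second, stabilize further so that $S$ is a core of each $3$-dimensional handlebody $X_i \cap X_j$ and hence of each $4$-dimensional sector $X_i$; third, stabilize once more to adjust the induced framing.

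\textbf{Stage 1 (isotoping $S$ into $\Sigma$).} A generic $1$-submanifold $S$ is disjoint from $\Sigma$ (codimension $2$ in $X$) and meets each wall $X_i\cap X_j$ (codimension $1$) transversely in finitely many points. Pushing these intersection points across the $3$-dimensional walls, we may assume $S$ lies inside a single sector, say $X_1$. Since $X_1$ is a $4$-dimensional $1$-handlebody, we isotope $S$ into $\partial X_1=(X_1\cap X_2)\cup_{\Sigma}(X_1\cap X_3)$, and put $S$ in bridge position with respect to this Heegaard splitting. Each elementary stabilization of $\T_X$ attaches a handle to $\Sigma$; by choosing a sequence of stabilizations along arcs dual to the bridges of $S$, we absorb those bridges into the new central surface, ending with $S\subset \Sigma$.

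\textbf{Stage 2 (making $S$ a core).} With $S\subset\Sigma$, the circle $S$ is a core of a $3$-dimensional handlebody $X_i\cap X_j$ precisely when it extends to a cut system of meridian disks on $\Sigma$. Any simple closed curve on a surface becomes part of a cut system after sufficiently many handle additions to the surface; using all three types of elementary stabilization (which, by permutation of indices, affect the cut systems of the three handlebodies essentially independently) we can simultaneously make $S$ a core of $X_1\cap X_2$, $X_2\cap X_3$, and $X_3\cap X_1$. Once $S$ is a core of $X_i\cap X_j$, the inclusion $X_i\cap X_j\hookrightarrow X_i$ carries this core (after at most one further stabilization to ensure primitivity in $\pi_1(X_i)$) to a core of the $4$-dimensional handlebody $X_i$. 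This step is the main obstacle: stabilizations aligning $S$ with one handlebody's cut system must not disrupt the others, but the independence of the three stabilization directions provides the needed flexibility.

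\textbf{Stage 3 (controlling the framing).} The framing of $S$ induced by $\Sigma$ is recorded by a parallel pushoff of $S$ inside the surface $\Sigma$. A local elementary stabilization along a small boundary-parallel arc based near a point of $S$ can be arranged so that the new central surface contains a full twist of the previous pushoff along $S$, shifting the induced framing by $\pm 1$. Iterating this operation realizes any prescribed integer framing, completing the lemma.
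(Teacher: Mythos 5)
There are genuine gaps here, most seriously in Stages 2 and 3.

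Stage 3 misidentifies the framing group. For an oriented circle $S$ in an oriented $4$-manifold $X$, the normal bundle is a trivial rank-$3$ bundle, so framings are classified by $\pi_1(SO(3))\cong\Z/2$, not by $\Z$. Your language about ``shifting the induced framing by $\pm 1$'' and ``realizing any prescribed integer framing'' treats the framing as $\Z$-valued, which is the wrong group; with only two framings there is no ``iterating'' to do. The paper's argument at this step is quite different: after a $1$-stabilization one obtains a curve $A$ on $\Sigma$ bounding a disk with interior disjoint from $\Sigma$, so $\mu(A)=0$ for the Rokhlin quadratic form $\mu: H_1(\Sigma;\Z)\to\Z/2$; Dehn twisting the projection $C$ of $S$ about $A$ is an isotopy in $X$ (since $A$ bounds a disk) but changes $\mu(C)$, which distinguishes the two framings. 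Your ``local stabilization that contains a full twist of the pushoff'' is not explained and is not obviously a trisection stabilization at all.

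Stage 2 asserts the conclusion rather than constructing it. The claim that the three types of stabilization ``affect the cut systems of the three handlebodies essentially independently'' is not true: every elementary stabilization changes the central surface $\Sigma$, which is simultaneously the boundary of all three handlebodies $X_i\cap X_j$, so there is nontrivial interaction. Also, ``any simple closed curve on a surface becomes part of a cut system after sufficiently many handle additions'' is not correct as stated for a fixed handlebody; one needs a specific construction in which the curve is arranged to run once over a compressing disk. Finally, your parenthetical ``after at most one further stabilization to ensure primitivity in $\pi_1(X_i)$'' glosses over the actual issue: a core of the $3$-dimensional $X_i\cap X_j$ need not map to a generator of $\pi_1(X_i)$, and bounding the number of corrective stabilizations requires an argument. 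The paper avoids all of this by a direct construction: isotope $S$ into $X_1\cap X_2$ disjoint from a $1$-skeleton, project to $\Sigma$ and count $c(S)$ crossings, perform $2+c(S)$ $3$-stabilizations to make $S$ a core of $X_3$, and then perform one $1$-stabilization and one $2$-stabilization so that $S$ runs through the new cores of $X_1$ and $X_2$; the projection of $S$ to $\Sigma$ is then an embedded isotopy. Your Stage 1 bridge-position idea is plausible in spirit but also lacks the detail needed to know that the stabilization arcs you want exist and do what you claim; the paper's route via $X_1\cap X_2$ and counting double points of the projection is cleaner.
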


\begin{proof}
Since $X_1\cap X_2$ generates the unbased fundamental group of $X$, we may isotope $S$ to lie in $X_1\cap X_2$, disjoint from a $1$-skeleton of $X_1\cap X_2$. Let $\pi(S)$ denote the projection of $S$ onto $\Sigma=\boundary(X_1\cap X_2)$. Take $\pi(S)$ to have only double points of self-intersection, and let $c(S)$ denote the number of such double points. Then we may perform $2+c(S)$ $3$-stabilizations (see Figure~\ref{fig:2handlecircle}) so that $S$ is a core in the interior of $X_3$. Perform a $1$- and  a $2$-stabilization, taking $S$ to run through the core of each added genus to $X_1$ and $X_2$, and project $S$ onto $\Sigma$ to obtain an embedded curve $C$ in $\Sigma$. By construction, this projection can be taken to be an isotopy.

Let $A$ be the $\alpha$ curve in a triple of $\alpha,\beta,\gamma$ curves arising from the $1$-stabilization, so $A$ is parallel to a $\beta$ curve, intersects a $\gamma$ curve in point, and intersects $S$ in one point. Since $A$ bounds a disk whose interior is disjoint from $\Sigma$, $\mu(A)=0$, where $\mu:H_1(\Sigma;\Z)\to\Z/2$ is the Rokhlin quadratic form. Let $C'$ be a curve in $\Sigma$ obtained by Dehn twisting $C$ about $A$ (in either direction). Then $\mu(C)\neq\mu(C')$. Note $C'$ is isotopic to $C$ in $X$, so $C'$ is isotopic to $S$. Both $C$ and $C'$ include into each 3D and 4D piece of $\T$ as a core. Because there are only two possible framings on $S\subset X$, one of $C$ or $C'$ is the desired curve.

\end{proof}

\begin{figure}
\centering
\includegraphics[width=.85\textwidth]{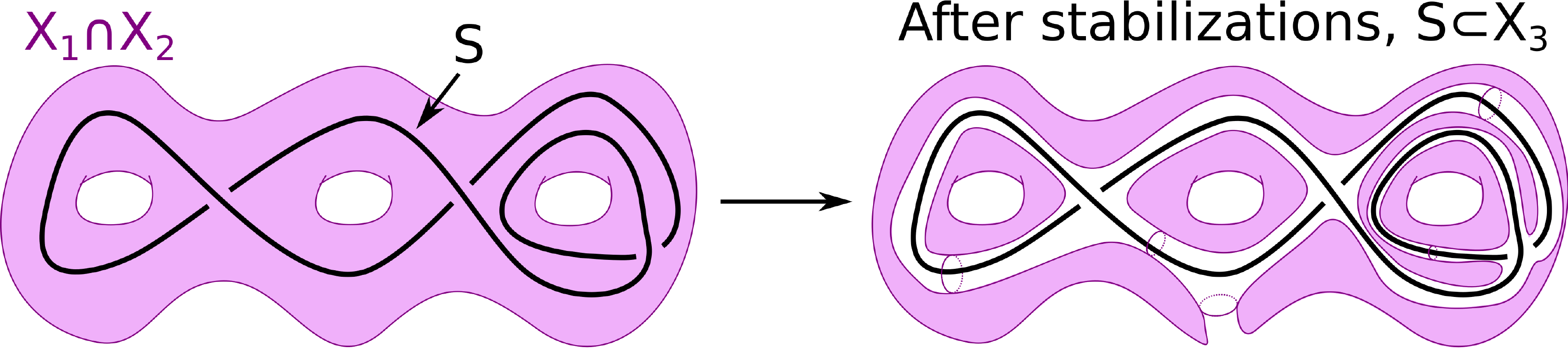}
\caption{In Lemma~\ref{lemmaindex2}, we isotope $S$ to lie in $X_1\cap X_2$ and then $3$-stabilize until $S$ is a core of $X_3$.}\label{fig:2handlecircle}
\end{figure}

\begin{proposition}
\label{prop:index2}
Suppose that there is a unique  critical point of $f$ of index-$2$ in the interior of $Y$. There exists a trisection $\M=(Y_1, Y_2, Y_3)$ of $Y$ that is strongly compatible with the trisections $\T_X$ and $\T_Z$.
\end{proposition}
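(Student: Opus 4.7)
The plan is to follow the template of the index-$1$ case (Proposition~\ref{prop:index1}) with $S^1$ replacing $S^0$, using Lemma~\ref{lemmaindex2} to place the attaching circle of the $2$-handle in the central surface in a trisection-friendly way. Let $S\subset X$ denote the attaching circle of the $2$-handle associated to the interior index-$2$ critical point. My first move is to apply Lemma~\ref{lemmaindex2} to stabilize $\T_X$ and isotope $S$ into the central surface $\Sigma$ of the (possibly stabilized) trisection, so that $S$ is simultaneously a core of every sector $X_i$ and every double intersection $X_i\cap X_j$, and so that the framing of $S$ induced by $\Sigma$ matches the framing prescribed by the $2$-handle. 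Each stabilization used is realized as a stabilization cobordism via Lemma~\ref{stabilize}, and these can be stacked at the bottom of $Y$ via Lemma~\ref{gluinglemma}; so without loss of generality I may assume $\T_X$ itself already satisfies the conclusion of Lemma~\ref{lemmaindex2} with respect to $S$.

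Next I build the local model. In Morse coordinates near the critical point, $f = x_1^2+x_2^2+x_3^2-x_4^2-x_5^2$. Splitting $\mathbb{R}^5=\mathbb{R}^2\times\mathbb{R}^3$, I set $g(x_1,x_2)=x_1^2+x_2^2$ and $\widetilde{f}(x_3,x_4,x_5)=x_3^2-x_4^2-x_5^2$, and pull back the rectangular trisection of the unit disk via the projection $\pi:\mathbb{R}^5\to\mathbb{R}^2$. The central submanifold is the hyperplane $\{x_1=x_2=0\}$, on which $\widetilde{f}$ restricts to an index-$2$ Morse function. The descending manifold meets $f^{-1}(-\epsilon)$ in the circle $R=\{x_1=x_2=x_3=0,\,x_4^2+x_5^2=\epsilon\}$, which lies entirely in the central submanifold. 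Exactly as in Proposition~\ref{prop:index1}, flowing along $\nabla\widetilde{f}$ produces a product neighborhood $\nu(R)\cong\nu(\widetilde{R})\times D^2$, and the local trisection restricts on this neighborhood to the pullback of the rectangular trisection of $D^2$. Identifying $\nu_X(S)\cong\nu_\Sigma(S)\times D^2\cong\nu(R)$ using the preparation above (with matching framings guaranteed by Lemma~\ref{lemmaindex2}), I glue the local model to $\T_X$ and flow along $\nabla f$ to extend the trisection past the critical point. This produces a trisection $\M'$ of $Y$ that restricts to $\T_X$ on $X$ and to some trisection $\T'_Z$ on $Z$.

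To verify the trisection axioms for $\M'$, I apply Proposition~\ref{prop:Morse-boundary} to the restriction of $f$ to each sector and double intersection of the rectangular model. In each case the critical point is boundary-unstable of index $2$, so every sector $Y_i$ is obtained from $X_i\times I$ by attaching a $5$-dimensional $2$-handle along the core circle $S$ of $X_i$, every $Y_i\cap Y_j$ is obtained from $(X_i\cap X_j)\times I$ by attaching a $4$-dimensional $2$-handle along the core circle $S$, and the central $3$-manifold is the $3$-dimensional trace of surgery on $S\subset\Sigma$. Because $S$ is a core of each $X_i$ (and of $X_i\cap X_j$) with the correct framing, the attached $2$-handle geometrically cancels a $1$-handle in each piece, so $Y_i$ remains a $5$-dimensional $1$-handlebody, $Y_i\cap Y_j$ remains a $0$-,$1$-,$2$-handlebody, and cores of $X_i$ include into cores of $Y_i$, giving strong compatibility on the $X$-side. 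Finally, since $\T'_Z$ need not equal $\T_Z$, I invoke Proposition~\ref{prop:change-trisection} to obtain a strongly compatible trisection of $Z\times[0,1]$ interpolating between $\T'_Z$ and $\T_Z$, and glue it to $\M'$ using Lemma~\ref{gluinglemma}.

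The only serious obstacle is hidden in Lemma~\ref{lemmaindex2}: arranging $S$ to be simultaneously a core of each sector and each double intersection with the correct framing inherited from $\Sigma$. Once $S$ is positioned correctly, the rest of the argument is a direct adaptation of the index-$1$ case, the only substantive difference being that the new handles have index $2$ (and therefore geometrically cancel existing $1$-handles in each piece) rather than introducing new generators.
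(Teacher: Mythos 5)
Your proposal is correct and follows essentially the same approach as the paper's proof: the same Morse-coordinate splitting $f = g + \widetilde{f}$, the same pullback of the rectangular disk trisection, the same appeal to Lemma~\ref{lemmaindex2} to put $S$ in core position with the correct framing, the same $2$-handle cancellation argument in each sector and double intersection, and the same final gluing via Proposition~\ref{prop:change-trisection} and Lemma~\ref{gluinglemma}. Your write-up is in fact slightly more explicit than the paper's sketch (e.g., invoking Proposition~\ref{prop:Morse-boundary} by name and correctly describing $R$ and $\widetilde{R}$ as circles, where the paper carries over a typographical ``$(0,0,\pm\sqrt{\epsilon})$'' from the index-$1$ case).
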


\begin{proof}
The proof is analogous to the proof of Proposition~\ref{prop:index1}, so we only sketch it.  Near a Morse critical point of index 2, we have Morse coordinates such that
\[f = x_1^2 + x_2^2 + x_3^2 - x_4^2 - x_5^2\]
We can view this as a function on $\mathbb{R}^2 \times \mathbb{R}^3$ and decompose $f$ as $g + \widetilde{f}$, where
\begin{align*}
g(x_1,x_2) &= x_1^2 + x_2^2 & \widetilde{f}(x_3,x_4,x_5) &= x_3^2 - x_4^2 - x_5^2
\end{align*}
Using the projection $\pi: \mathbb{R}^2 \times \mathbb{R}^3 \rightarrow \mathbb{R}^2$, we obtain a trisection near the Morse critical point by pulling back the rectangular trisection of the disk.  In this model, the central submanifold is the hyperplane $\{x_1 = x_2 = 0\}$ and the restriction of $\widetilde{f}$ to the central submanifold is a Morse function with a critical point of index 2.  The desending manifold of the critical point, with respect to $\nabla f$, is the contained in the plane $\{x_1 = x_2 = x_3 = 0\}$ and intersects the level set $f^{-1}(-\epsilon)$ in the $1$-sphere $R$ and let $\widetilde{R} = (0,0,\pm \sqrt{\epsilon}) \subset \mathbb{R}^3$ be its projection.  Let $\nu(\widetilde{R}) \subset \mathbb{R}^3$ be a tubular neighborhood of $\widetilde{R}$ in $\widetilde{f}^{-1}(-\epsilon)$.  We can find a tubular neighborhood $\nu(R)$ of $R$ in $f^{-1}(-\epsilon)$ of the form
\[\nu(R) \cong \nu(\widetilde{R}) \times D^2 = \nu(\widetilde{R}) \times g^{-1}([0,\epsilon/2])\]
The trisection of the local model restricts a trisection of $\nu(R)$ obtained by pulling back the rectangular trisection of the disk under the projection $\nu(R) \rightarrow D^2$.  By Lemma~\ref{lemmaindex2}, we can take this trisection to agree with $\T_X$. Via an identification
\[ \nu_X(S) \cong \nu_{\Sigma}(S) \times D^2 \cong \nu(\widetilde{R}) \times D^2 \cong \nu(R)\]
we can use this model to extend a trisection from below the critical point to above the critical point.

The topological result is as follows.  In the local model, The central submanifold is the hyperplane $\{x_1 = x_2 = 0\}$ and the function $f$ restricts to a Morse function with a critical point of index 2.  Thus, moving from height $-\epsilon$ to height $\epsilon$ results in surgery on $S \subset \Sigma$, decreasing the genus by 1.  The double intersection $H_1 = D^r_1 \cap D^r_3$ is $\{x_1 \geq 0; x_2 = 0\}$ and the restriction of $f$ restricts has a boundary unstable critical point of index 2.  Thus, topologically this adds a 2-handle to $H_1$ along $S$; however, by assumption, this handle is attached along a core and therefore cancels a 1-handle in $H_1$.  By symmetry, this is also true of the remaining double intersections.  Finally, the top dimensional sector $D^r_1$, in the rectangular model of the trisection, is the halfspace $\{x_1 \geq 0\}$.  The restriction of $f$ has a boundary unstable critical point of index 2 and so the topological effect of moving from height $-\epsilon$ to $\epsilon$ is to add a 2-handle to $D^r_1$ along $S$.  Since $S$ represents a core, this 2-handle cancels a 1-handle.  Again by symmetry, this is true of the remaining sectors.

Strong compatibility follows as in the proof of Proposition \ref{prop:index1}. Then we can glue this local model to a model that changes the trisection on $Z$ to obtain the required trisection of $Y$.
\end{proof}

We remark that we now have all of the technology needed to prove Theorem \ref{thrm:cobord}. For now, if $f$ is self-indexing with only index-$1$, -$2$, -$3$, -$4$ points, we may trisect $f^{-1}[0,5/2]$ and $f^{-1}[5/2,5]$ separately (by turning $f^{-1}[5/2,5]$ upside down) and then glue the two copies of $f^{-1}(5/2)$. However, we instead continue to build the trisection upward from level $0$ for completion of the analogy between the handle structure and the trisection structure.

\subsection{Index-3}

\begin{lemma}\label{lemma1bridge}
Let $S$ be an embedded $S^2 \subset X$. %with $S \cdot S = 0$.
By an isotopy, we can assume that $S$ is in 1-bridge position with respect to a stabilization of $\T_X$.
\end{lemma}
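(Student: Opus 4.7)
The plan is to isotope $S$ into $1$-bridge position relative to a stabilization of $\T_X$, meaning that $S$ meets each $4$-dimensional sector $X_i$ in a single boundary-parallel disk $D_i$, each pair of these disks shares a single boundary-parallel arc in the $3$-dimensional double intersection $X_i \cap X_j$, and $S$ meets the central surface $\Sigma$ transversely in exactly two points. This is the direct $2$-dimensional analogue of the $S^1$-position produced by Lemma~\ref{lemmaindex2}, and the proof follows a parallel template.

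First I would apply general position to make $S$ transverse to $\Sigma$, to each double intersection $X_i \cap X_j$, and to each sector boundary. Each $S \cap X_i$ is then a properly embedded $2$-submanifold of the $4$-dimensional $1$-handlebody $X_i$; since $\pi_2(X_i) = 0$, any closed component of $S \cap X_i$ bounds a $3$-ball in $X_i$ and may be removed by isotopy. Removing closed-curve components of $S \cap (X_i \cap X_j)$ similarly via innermost-disk arguments in the $3$-dimensional handlebodies leaves $S \cap X_i$ as a disjoint union of, say, $b_i$ disks with boundary in $\partial X_i$. At this stage $S$ admits a ``generalized bridge trisection'' with bridge multiplicities $(b_1, b_2, b_3)$.

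Next I would inductively reduce $(b_1, b_2, b_3)$ to $(1,1,1)$ by stabilizing $\T_X$. An elementary $k$-stabilization along a boundary-parallel arc $C \subset X_i \cap X_j$ attaches a $4$-dimensional $1$-handle to $X_k$ (enlarging it to $X_k' = X_k \cup \overline{\nu(C)}$) and excises $\nu(C)$ from $X_i$ and $X_j$. Choosing $C$ so that its endpoints on $\Sigma$ sit on the boundaries of two distinct disks of $S \cap X_k$, the stabilization tubes together those two disks through the new $1$-handle, merging them into a single disk of $S \cap X_k'$; a companion stabilization then reconciles the corresponding change in $S \cap (X_i \cap X_j)$, so that the overall bridge count strictly decreases. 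Iterating and permuting the roles of the three sectors reduces all $b_i$ to $1$. Then the Euler characteristic $\chi(S) = 2$ forces $|S \cap \Sigma| = 2$ and each $|S \cap (X_i \cap X_j)| = 1$, and the disks and arcs are automatically boundary-parallel by irreducibility of the handlebodies.

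The main obstacle is the merging step, which parallels the delicate isotopy analysis in Lemma~\ref{lemmaindex2}. The candidate arc $C$ must be embedded, boundary-parallel in its $3$-piece, and arranged with endpoints on $\Sigma$ on the correct boundary components of $S \cap X_k$; projecting such a prospective arc onto $\Sigma$ may produce self-intersections or framing obstructions, and additional preliminary stabilizations of $\T_X$ will be required to resolve these, just as Lemma~\ref{lemmaindex2} used $3$-stabilizations at each self-crossing of the projected circle and exploited the Rokhlin form to fix the framing. Connectivity and simple-connectivity of $S^2$ guarantee enough flexibility to find valid merging arcs at each step, so the inductive procedure terminates in the desired $1$-bridge position.
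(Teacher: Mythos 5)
Your proposal takes a genuinely different route from the paper. The paper's entire proof is a one-line citation: the lemma is a specialization of Theorem~1.2 of Meier--Zupan, whose content is precisely that a smoothly embedded surface in a trisected $4$-manifold can be isotoped into (generalized) bridge trisected position, with bridge number reducible to $1$ at the cost of stabilizing the trisection. You instead attempt to rederive this from scratch via general position and an inductive reduction of bridge multiplicities. That is a legitimate ambition, but it amounts to reproving a nontrivial theorem that occupies its own paper, and the sketch as written has real gaps in exactly the places where the work lives.

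Two concrete gaps. First, after eliminating closed components of $S \cap (X_i \cap X_j)$ by innermost-disk arguments, you conclude that each $S \cap X_i$ is a disjoint union of disks. This does not follow: a properly embedded subsurface of $S^2$ cut along a collection of arcs is planar, but the pieces landing in a given $X_i$ can be planar surfaces with several boundary components (pairs of pants, etc.), and even if they are disks nothing in the argument shows they are \emph{boundary-parallel} disks, nor that the arcs $S \cap (X_i \cap X_j)$ form \emph{trivial} tangles. Triviality of the tangles and of the disk systems is the content of bridge position, and passing from general position to bridge position is the hard analytic step in Meier--Zupan (which they handle via a Morse-$2$-function / banded unlink argument, not general position); your sketch silently assumes it. Second, the merging step is unjustified: the stabilization arc $C$ must be boundary-parallel in its $3$-dimensional piece, you have not shown such an arc exists with endpoints on the desired disk boundaries, you have not shown that after the stabilization the new disk (and the altered arcs) remain boundary-parallel so that you are still in bridge position, and the assertion that a companion stabilization makes the bridge count strictly decrease is stated but not argued. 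The appeal to ``connectivity and simple-connectivity of $S^2$'' is not a substitute for these verifications. If you want a self-contained proof rather than a citation, the cleanest route is to follow Meier--Zupan's argument (Morse position of $S$ and meridional stabilization) rather than trying to engineer it by hand from general position.
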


\begin{proof}
This is a specialization of~\cite[Theorem 1.2]{MZ}.
\end{proof}

\begin{proposition}
\label{prop:index3}
Suppose that there is a unique  critical point of $f$ of index-$3$ in the interior of $Y$. There exists a trisection $\M=(Y_1, Y_2, Y_3)$ of $Y$ that is strongly compatible with the trisections $\T_X$ and $\T_Z$.
\end{proposition}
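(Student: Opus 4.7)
The plan is to follow the template of Propositions~\ref{prop:index1} and~\ref{prop:index2}, now using Lemma~\ref{lemma1bridge} in place of Lemma~\ref{lemmaindex2}. First, by Lemma~\ref{lemma1bridge}, after a stabilization of $\T_X$ and an isotopy we may assume that the descending 2-sphere of the index-3 critical point (viewed as an embedded $S \subset X$ via the flow of $-\nabla f$) is in 1-bridge position with respect to $\T_X$. This gives a decomposition $S = D_1 \cup D_2 \cup D_3$ with each $D_i \subset X_i$ a boundary-parallel disk and each arc $D_i \cap D_j \subset X_i \cap X_j$ having endpoints on the two poles of $S$ in the central surface $\Sigma$.

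I would then build a local model for the 3-handle attachment. In Morse coordinates $f = x_1^2 + x_2^2 - x_3^2 - x_4^2 - x_5^2$, the descending 2-sphere $R$ exhausts the level set $\widetilde{f}^{-1}(-\epsilon) = \{x_3^2 + x_4^2 + x_5^2 = \epsilon\}$, so unlike in the lower-index cases the naive projection $\pi(x_1,\dots,x_5) = (x_1,x_2)$ does not produce a local trisection compatible with a 1-bridge decomposition of $S$. Instead, I would trisect the 3-handle $D^3 \times D^2 \cong B^5$ using its standard trisection (Lemma~\ref{lemma:std}), equivalently taking sector $i$ to be $E_i \times D^2$, where $D^3 = E_1 \cup E_2 \cup E_3$ is the standard trisection of $B^3$. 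On the attaching region $S^2 \times D^2 = \partial D^3 \times D^2$, sector $i$ restricts to $(E_i \cap \partial D^3) \times D^2$, and the disks $E_i \cap \partial D^3$ form a 1-bridge decomposition of $S^2$. After identifying the attaching region with a tubular neighborhood of $S$ in $X \times \{1/2\}$ matching the $D_i$ (the framing of $S$ is unique since $\pi_2(SO(2)) = 0$), the trisection of the 3-handle extends the product trisection of $X \times I$ inherited from $\T_X$.

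I would then check that the resulting decomposition $\M'$ is a trisection strongly compatible with $\T_X$. The sector $Y_i = (X_i \times I) \cup (E_i \times D^2)$ is obtained by attaching a $B^5$ along the 4-ball $D_i \times D^2$, which does not change homotopy type, so $Y_i$ is a 1-handlebody of the same genus as $X_i$. The double intersection $Y_i \cap Y_j = (X_i \cap X_j) \times I \cup (E_i \cap E_j) \times D^2$ is obtained by attaching a $B^4$ along a 3-ball, and so remains a $0$-,$1$-handlebody. The central 3-manifold $Y_1 \cap Y_2 \cap Y_3$ is obtained from $\Sigma \times I$ by attaching a $B^3 \cong (E_1 \cap E_2 \cap E_3) \times D^2$ along two disks corresponding to the two poles of $S$, producing a 3-dimensional cobordism realizing $S^0$-surgery on $\Sigma$.

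Finally, $\M'$ induces some trisection $\T'_Z$ on the new top boundary which may differ from the prescribed $\T_Z$; Proposition~\ref{prop:change-trisection} and Lemma~\ref{gluinglemma} let us glue on a product cobordism to obtain the required trisection of $Y$. The principal obstacle is the degeneracy of the Morse local model in this case — unlike in the proofs of the previous two propositions, the descending sphere fills $\widetilde{f}^{-1}(-\epsilon)$, so one cannot pull back the rectangular trisection from the naive projection onto the positive-definite Morse directions. Overcoming this requires using the 1-bridge decomposition provided by Lemma~\ref{lemma1bridge} to set up an alternative local model built from the standard trisection of $B^3$ and verifying that it patches correctly with $\T_X$ along $\nu(S)$; everything downstream is formally parallel to the index-1 and index-2 arguments.
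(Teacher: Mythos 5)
Your proof is correct and takes essentially the same approach as the paper, with one cosmetic difference in how the local model is presented. Where you trisect the $3$-handle $D^3 \times D^2$ by taking sector $i$ to be $E_i \times D^2$ with $E_i$ the standard trisection of the core $B^3$, the paper instead writes the Morse function near the critical point as $\widetilde{f}(x_1,x_2,x_3) - g(x_4,x_5)$ and pulls back the rectangular disk trisection under the projection onto the two \emph{negative-definite} coordinates $(x_4,x_5)$ --- i.e., the index-$1$ model ``turned upside-down.'' Your closing remark that ``one cannot pull back the rectangular trisection from the naive projection onto the positive-definite Morse directions'' is right, but the conclusion you draw (that the projection approach degenerates and a separate construction is needed) is slightly off: a projection still works, just onto the other pair of coordinates, and it produces exactly your $E_i \times D^2$ decomposition once you identify the core $D^3$ with $\{x_1 = x_2 = 0\}$ and choose the standard trisection of $B^3$ to be the one pulled back from the $(x_4,x_5)$-plane. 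Everything else --- invoking Lemma~\ref{lemma1bridge} to put the descending sphere in $1$-bridge position, the framing uniqueness via $\pi_2(SO(2)) = 0$ (which the paper leaves implicit), the handle-by-handle verification of the sectors and double intersections, and the final patching via Proposition~\ref{prop:change-trisection} and Lemma~\ref{gluinglemma} --- matches the paper's argument.
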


\begin{proof}
This model can be obtained by turning the model in Proposition~\ref{prop:index1} upside-down.   Near a Morse critical point of index 3, we have Morse coordinates such that
\[f = x_1^2 + x_2^2 - x_3^2 - x_4^2 - x_5^2\]
We can view this as a function on $\mathbb{R}^3 \times \mathbb{R}^2$ and decompose $f$ as $\widetilde{f} - g$, where
\begin{align*}
\widetilde{f}(x_1,x_2,x_3) &= x_1^2 + x_2^2 - x_3^2 & g(x_4,x_5) &= x_4^2 + x_5^2 & 
\end{align*}
Using the projection $\pi: \mathbb{R}^3 \times \mathbb{R}^2 \rightarrow \mathbb{R}^2$, we obtain a trisection near the Morse critical point by pulling back the rectangular trisection of the disk.  In this model, the central submanifold is the hyperplane $\{x_4 = x_5 = 0\}$ and the restriction of $\widetilde{f}$ to the central submanifold is a Morse function with a critical point of index 1. %By Lemma~\ref{lemma1bridge}, we can take this model to agree with $\T_X$.

Now, however, the descending manifold intersects $f^{-1}(-\epsilon)$ along the 2-sphere $R = \{-x_3^2 - x_4^2 - x_5^2 = -\epsilon\}$.  The trisection of the local model restricts to give the standard rectangular trisection of $R$. We can choose a neighborhood $\nu(R)$ and a splitting $\nu(R) = R \times D^2$ such that the trisection of local model, restricted to $\nu(R)$, is the same as the trisection obtained by pulling back the standard trisection of $S^2$ by the projection $\nu(R) \rightarrow R$.

Now suppose $S$ is in 1-bridge position via Lemma~\ref{lemma1bridge}.  This means that $\T_X$ restricted to $S$ is exactly the standard rectangular trisection of $S^2$ (up to isotopy).  We can choose an identification $\nu(S) = S \times D^2$ such that $\T_X$ restricted to $\nu(S)$ is exactly the trisection obtained by pulling back the standard rectangular trisection of $S^2$ by the projection $\nu(S) \rightarrow S$.  Via an identification
\[ \nu_X(S) \cong \nu_{\Sigma}(S) \times D^2 \cong \nu(\widetilde{R}) \times D^2 \cong \nu(R)\]
we can use this model to extend a trisection from below the critical point to above the critical point.

The topological result is as follows.  In the local model, The central submanifold is the hyperplane $\{x_4 = x_5 = 0\}$ and the function $f$ restricts to a Morse function with a critical point of index 1.  Thus, moving from height $-\epsilon$ to height $\epsilon$ results in surgery on $S \subset \Sigma$, increasing the genus by 1.  The double intersection $H_1 = D^r_1 \cap D^r_3$ is $\{x_1 \geq 0; x_2 = 0\}$ and the restriction of $f$ restricts has a boundary stable critical point of index 2.  Thus, moving from height $-\epsilon$ to height $\epsilon$ does not change the topology.  The top dimensional sector, in the rectangular model of the trisection, is the plane $\{x_5 \geq 0\}$.  This is a boundary stable critical point of index 3, so moving from height $-\epsilon$ to height $\epsilon$ does not change the topology.  

Strong compatibility follows as in the proof of Proposition \ref{prop:index1}. Then we can glue this local model to a model that changes the trisection on $Z$ to obtain the required trisection of $Y$.
\end{proof}

\subsection{Index-4}

An $S^3 \subset X$ is in {\it standard position} with respect to some trisection if $S \pitchfork \Sigma$ is a simple closed curve $c$ that bounds in all three handlebodies.  When an embedded $S^3$ is in standard position with respect to $\T$, the restriction of $\T$ is exactly the standard rectangular trisection of $S^3$ (up to isotopy).  Note that if the curve $c$ is separating, this is a locally a model for connected sum; if the curve is nonseparating, this is a model for an $S^1 \times S^3$ factor.

\begin{lemma}\label{lemmaindex4}
Let $S$ be an embedded $S^3 \subset X$.  By an isotopy and a stabilization of the trisection $\T_X$, we can assume that $S$ is in standard position with respect to $\T_X$.
\end{lemma}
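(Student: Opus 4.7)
The plan is to reduce the intersection of $S$ with the structures of $\T_X$ by innermost-disk arguments and elementary stabilizations, in direct analogy with the bridge-trisection argument underlying Lemma~\ref{lemma1bridge}. First, I would put $S$ in general position so that $S \pitchfork \Sigma$, yielding $S \cap \Sigma = c_1 \sqcup \cdots \sqcup c_n$ as a disjoint union of simple closed curves on $\Sigma$, and transverse to each handlebody $X_i \cap X_j$ and each sector $X_i$. Then each $S \cap (X_i \cap X_j)$ is a compact surface with boundary in $\bigsqcup_k c_k$, and each $S \cap X_i$ is a compact $3$-submanifold of $X_i$.

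Next, I reduce $S \cap \Sigma$ to a single curve. Since $S \cong S^3$ is simply connected, every $c_k$ bounds a disk in $S$; choose $c_0$ innermost on $S$, bounding a disk $D \subset S \cap X_\ell$ with interior disjoint from $\Sigma$. If $c_0$ also bounds a compressing disk $D' \subset X_\ell \cap X_m$ with interior disjoint from $S$, then $D \cup D'$ bounds a $3$-ball in $X_\ell$, so I isotope $S$ across it to eliminate $c_0$. Otherwise, I perform an elementary stabilization of $\T_X$ with stabilization arc in $X_\ell \cap X_m$ chosen disjoint from $S$ and engineered so that the new handle produces a compressing disk for $c_0$, then carry out the surgery as above. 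Iterating yields $S \cap \Sigma = c$, a single simple closed curve.

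Finally, I arrange that $c$ bounds disks in each of the three handlebodies. After the previous step, each $F_{ij} := S \cap (X_i \cap X_j)$ is a compact surface in $X_i \cap X_j$ with $\boundary F_{ij} = c$. If $c$ is null-homotopic in $X_i \cap X_j$, standard compressing-disk arguments let me isotope $F_{ij}$ to a disk. If instead $c$ represents a nontrivial loop in $X_i \cap X_j$, I perform an elementary stabilization whose stabilization arc lies in $X_i \cap X_j$ and whose resulting new genus carries a meridian along which $c$ becomes compressible. Repeating for each pair gives $c$ bounding disks in all three handlebodies, i.e.\ standard position.

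The main obstacle is ensuring that each stabilization does not introduce new components of $S \cap \Sigma$ or disrupt earlier progress. I would address this by always choosing stabilization arcs inside small coordinate balls disjoint from $S$, using the positive dimension of the handlebodies. Termination follows from the fact that each isotopy or stabilization strictly decreases a lexicographic complexity such as $(|S \cap \Sigma|,\, \sum_{i<j} \operatorname{genus}(F_{ij}))$, so the procedure finishes in finitely many moves.
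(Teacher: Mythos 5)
Your approach is fundamentally different from the paper's. The paper sidesteps all hands-on disk-swapping by exploiting the stable uniqueness of trisections (Gay--Kirby, Theorem~11): cut $X$ along $S$ to produce a trisection $\T_S$ of $X$ in which $S$ is obviously in standard position --- if $S$ separates, write $X=X'\#_S X''$ and take $\T_S=\T'\#\T''$; if $S$ is nonseparating, pick a dual circle $\gamma$ meeting $S$ once, decompose $X\cong X'\#(S^1\times S^3)$ along $\partial\nu(S\cup\gamma)$, and use the standard trisection of $S^1\times S^3$ in which $\{pt\}\times S^3$ is standard. Then pass to a common stabilization of $\T_S$ and $\T_X$ and observe that stabilization (with arcs chosen disjoint from $S$) preserves standard position. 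This is a short argument that completely avoids the difficulties your version runs into, and the dichotomy separating/nonseparating is the key idea your proposal never uses.

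Your bridge-position-style argument also has genuine gaps. The central step --- that $D\cup D'$ bounds a $3$-ball in $X_\ell$, so you can isotope across it --- is unjustified and not automatic: $X_\ell$ is a $4$-manifold, and an embedded $2$-sphere in a $4$-manifold need not bound an embedded $3$-ball. The familiar innermost-disk/disk-swap technique from $3$-manifold topology relies on the compressing disks being codimension one in the ambient manifold; here $D$ and $D'$ are codimension two in the $4$-dimensional sector $X_\ell$, so the analogy does not transfer without further argument. The termination claim is also incomplete: each stabilization is introduced precisely to manufacture a compressing disk, but you never bound the number of stabilizations needed before a compression can actually be carried out, and the lexicographic complexity $\bigl(|S\cap\Sigma|,\ \sum_{i<j}\operatorname{genus}(F_{ij})\bigr)$ is not shown to decrease under a stabilization-then-compression pair in general (a stabilization keeps the first coordinate fixed, and a compression can a priori increase the genus of other $F_{ij}$'s). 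Finally, the assertion that a null-homotopic $c$ lets you ``isotope $F_{ij}$ to a disk'' requires an ambient isotopy of $S$ inside $X$, not merely a compression of the abstract surface $F_{ij}$ in $X_i\cap X_j$; this is stated without proof and is exactly the kind of thing the paper's connect-sum-plus-stable-equivalence trick is designed to avoid.
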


\begin{proof}
Suppose that $S$ is separating.  Then $X$ decomposes as a connected sum $X = X_1 \#_S X_2$.  Choose trisections $\T_1$ and $\T_2$ of $X_1$ and $X_2$, respectively.  $X$ therefore admits a trisection $\T_S = \T_1 \# \T_2$ and $S$ is in standard position with respect to $\T_S$.  The trisections $\T$ and $\T_S$ admit a common stabilization $\widetilde{T}$.  Furthermore, 1-stabilization preserves the fact that $S$ is in standard position.

Now suppose $S$ is nonseparating.  Let $\gamma$ be a closed curve that intersects $S$ transversely in a single point.  Then $X$ decomposes as a connected sum along the boundary of the tubular neighborhood $\nu(S \cup \gamma)$ into $X' \# S^1 \times S^3$.  Let $\T'$ be a trisection of $X'$ and let $\T_{sphere}$ be the standard trisection of $S^1 \times S^3$.  The sphere $S$ is isotopic to $\{pt\} \times S^3$ and this sphere is in standard position with respect to $\T_{sphere}$.  The connected sum $\T' \# \T_{sphere}$ is a trisection of $X$.  Again, the trisections $\T$ and $\T' \# \T_{sphere}$ have a common stabilization $\widetilde{T}$ and $S$ is in standard position with respect to this trisection.
\end{proof}

\begin{proposition}
\label{prop:index4}
Suppose that there is a unique  critical point of $f$ of index-$4$ in the interior of $Y$. There exists a trisection $\M=(Y_1, Y_2, Y_3)$ of $Y$ that is strongly compatible with the trisections $\T_X$ and $\T_Z$.
\end{proposition}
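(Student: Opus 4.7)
The plan is to follow the template set by the proofs of Propositions~\ref{prop:index1}, \ref{prop:index2}, and \ref{prop:index3}: build a local trisection in Morse coordinates around the critical point, use Lemma~\ref{lemmaindex4} to align this local trisection with $\T_X$ along a tubular neighborhood of the attaching $3$-sphere, extend the local model through the critical level, and finally glue in a stabilization cobordism via Proposition~\ref{prop:change-trisection} and Lemma~\ref{gluinglemma} to correct the induced trisection on $Z$.

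Concretely, choose Morse coordinates in which
\[f = x_1^2 - x_2^2 - x_3^2 - x_4^2 - x_5^2,\]
decompose $\mathbb{R}^5 = \mathbb{R}^3 \times \mathbb{R}^2$, and write $f = \widetilde{f} - g$ with $\widetilde{f}(x_1,x_2,x_3) = x_1^2 - x_2^2 - x_3^2$ and $g(x_4,x_5) = x_4^2 + x_5^2$. Pulling the rectangular trisection of the unit disk back under $\pi \colon \mathbb{R}^3 \times \mathbb{R}^2 \to \mathbb{R}^2$ gives a trisection of a neighborhood of the critical point. The local central submanifold is $\{x_4 = x_5 = 0\}$, on which $\widetilde{f}$ has an interior index-$2$ critical point. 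The descending manifold is the hyperplane $\{x_1 = 0\}$, meeting $f^{-1}(-\epsilon)$ in the attaching $3$-sphere $R = \{x_1 = 0,\; x_2^2 + x_3^2 + x_4^2 + x_5^2 = \epsilon\}$; the local trisection restricts to the standard rectangular trisection of $S^3$ on $R$, and splits a tubular neighborhood as $\nu(R) \cong R \times D^2$ via pullback of the disk trisection. Applying Lemma~\ref{lemmaindex4} to $R \subset X$, we isotope $R$ and stabilize $\T_X$ so that $R$ is in standard position; then $\T_X|_R$ is the standard rectangular trisection of $S^3$, and $\nu_X(R) \cong R \times D^2$ can be chosen so that $\T_X|_{\nu_X(R)}$ matches the local Morse trisection. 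Extending through the critical value produces a trisection $\M'$ of $Y$ restricting to $\T_X$ on $X$ and to some $\T'_Z$ on $Z$.

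It remains to check the trisection axioms, verify strong compatibility, and correct $\T'_Z$ to $\T_Z$. Since $n - k = 5 - 4 = 1$, the only boundary-unstable direction at the critical point is $x_1$, which is not among the projection coordinates $(x_4,x_5)$; by Proposition~\ref{prop:Morse-boundary}, every critical point of $f$ restricted to a sector $Y_i$ or a double intersection $Y_i \cap Y_j$ is therefore boundary stable, so each such piece is topologically unchanged when passing the critical level. These pieces retract onto $X_i$ and $X_i \cap X_j$ respectively, taking cores to cores, which yields strong compatibility. The only interior critical point lives on the central submanifold $\mathcal{C}_Y = Y_1 \cap Y_2 \cap Y_3$, where it is index-$2$, so $\mathcal{C}_Y$ is obtained by attaching a $2$-handle along the circle $c = R \cap \Sigma_X$; this is consistent with standard position, since $c$ bounds in each handlebody of $\T_X$. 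Combining $(Y, \M')$ with a trisected cobordism from $\T'_Z$ to $\T_Z$ on $Z \times I$ (Proposition~\ref{prop:change-trisection}) via Lemma~\ref{gluinglemma} yields the required trisection. The main subtlety is matching the local Morse trisection of $\nu(R)$ to $\T_X|_{\nu_X(R)}$---this is precisely what standard position (Lemma~\ref{lemmaindex4}) supplies.
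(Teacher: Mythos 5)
Your proof follows the paper's own argument essentially step for step: the same Morse coordinate splitting $f = \widetilde{f} - g$, the same pullback of the rectangular disk trisection under $\pi(x_1,\dots,x_5)=(x_4,x_5)$, Lemma~\ref{lemmaindex4} to put the attaching $3$-sphere in standard position, and the same gluing via Proposition~\ref{prop:change-trisection} and Lemma~\ref{gluinglemma}; your boundary-stability reasoning (the only boundary-unstable direction $x_1$ is disjoint from the projection coordinates) is a clean reformulation of the paper's case-by-case index count. One small slip: $R$ is a $3$-sphere inside the $4$-manifold $f^{-1}(-\epsilon)$, so its tubular neighborhood should be $\nu(R)\cong R\times D^1$, not $R\times D^2$ as you wrote twice; the rest of the argument is unaffected.
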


\begin{proof}

This model can be obtained by turning the model in Proposition~\ref{prop:index1} upside-down.   Near a Morse critical point of index 4, we have Morse coordinates such that
\[f = x_1^2 - x_2^2 - x_3^2 - x_4^2 - x_5^2\]
We can view this as a function on $\mathbb{R}^3 \times \mathbb{R}^2$ and decompose $f$ as $\widetilde{f} - g$, where
\begin{align*}
\widetilde{f}(x_1,x_2,x_3) &= x_1^2 - x_2^2 - x_3^2 & g(x_4,x_5) &= x_4^2 + x_5^2 & 
\end{align*}
Using the projection $\pi: \mathbb{R}^3 \times \mathbb{R}^2 \rightarrow \mathbb{R}^2$, we obtain a trisection near the Morse critical point by pulling back the standard rectangular trisection of the disk.  In this model, the central submanifold is the hyperplane $\{x_4 = x_5 = 0\}$ and the restriction of $\widetilde{f}$ to the central submanifold is a Morse function with a critical point of index 2.

Now, however, the descending manifold intersects $f^{-1}(-\epsilon)$ along the 3-sphere $R = \{-x_2^2-x_3^2 - x_4^2 - x_5^2 = -\epsilon\}$.  The trisection of the local model restricts to give the standard rectangular trisection of $R$.  We can choose a neighborhood $\nu(R)$ and a splitting $\nu(R) = R \times D^1$ such that the trisection of local model, restricted to $\nu(R)$, is the same as the trisection obtained by pulling back the standard trisection of $S^3$ by the projection $\nu(R) \rightarrow R$.

Now suppose $S$ is in standard position (using Lemma~\ref{lemmaindex4}).  This means that $\T_X$ restricted to $S$ is exactly the standard trisection of $S^3$.  We can choose an identification $\nu(S) = S \times D^1$ such that $\T_X$ restricted to $\nu(S)$, is exactly the trisection obtained by pulling back the standard rectangular trisection of $S^3$ by the projection $\nu(S) \rightarrow S$. Via an identification
\[ \nu_X(S) \cong \nu_{\Sigma}(S) \times D^1 \cong \nu(\widetilde{R}) \times D^1 \cong \nu(R)\]
we can use this model to extend a trisection from below the critical point to above the critical point.

The topological result is as follows.  In the local model, The central submanifold is the hyperplane $\{x_4 = x_5 = 0\}$ and the function $f$ restricts to a Morse function with a critical point of index 2.  Thus, moving from height $-\epsilon$ to height $\epsilon$ results in surgery on $S \subset \Sigma$, decreasing the genus by 1.  The double intersection $H_1 = D^r_1 \cap D^r_3$ is $\{x_1 \geq 0; x_2 = 0\}$ and the restriction of $f$ restricts has a boundary stable critical point of index 3.  Thus, moving from height $-\epsilon$ to height $\epsilon$ does not change the topology.  The top dimensional sector, in the rectangular model of the trisection, is the plane $\{x_5 \geq 0\}$.  This is a boundary stable critical point of index 4, so moving from height $-\epsilon$ to height $\epsilon$ does not change the topology.  

Strong compatibility follows as in the proof of Proposition \ref{prop:index1}. Then we can glue this local model to a model that changes the trisection on $Z$ to obtain the required trisection of $Y$.
\end{proof}

The following is a slightly stronger statement of Theorem \ref{thrm:cobord}.

\begin{theorem}\label{maintheorem}
Let $Y$ be a cobordism between smooth, closed, connected $4$-manifolds $X$ and $Z$. Fix trisections $\T_X=(X_1,X_2,X_3)$ of $X$ and $\T_Z=(Z_1, Z_2, Z_3)$ of $Z$.

Let $h:Y\to 5$ be a relative Morse function so that $\boundary Y=h^{-1}(\{0,5\})$, where $X=h^{-1}(0)$ and $Z=h^{-1}(5)$. Assume that $h\vert_{\int{Y^4}}$ has no index-$0$ or -$5$ critical points  and that all index-$i$ critical points of $h\vert_{\int{Y^4}}$ lie in $h^{-1}(i)$ for $i=1,2,3,4$.

Then there exists a trisection $\M=(Y_1, Y_2, Y_3)$ of $Y$ so that $Y_i\cap h^{-1}(0)=X_i$ and $Y_i\cap h^{-1}(5)=Z_i$ for each $i=1,2,3$. Moreover, $\M$ is strongly compatible with $Y$. %core of each $X_i$ and $Z_i$ includes into a core of $Y_i$.
\end{theorem}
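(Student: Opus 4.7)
The plan is to decompose $Y$ along regular level sets of $h$ into elementary subcobordisms, each containing exactly one critical point, then inductively trisect each piece via Propositions \ref{prop:index1}--\ref{prop:index4}, and finally assemble by Lemma \ref{gluinglemma}.

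First I would perturb $h$ slightly so that all critical values become distinct while each index-$i$ critical point still lies in $h^{-1}\bigl((i-1/3,\,i+1/3)\bigr)$; this is possible since the original critical points all lie on integer level sets. Enumerate the perturbed critical values $c_1 < c_2 < \cdots < c_N$ and interleave them with regular values $0 = r_0 < r_1 < \cdots < r_N = 5$ so that $c_j \in (r_{j-1}, r_j)$. Each $W_j := h^{-1}([r_{j-1}, r_j])$ is then a cobordism of closed 4-manifolds containing a single interior critical point, whose index lies in $\{1, 2, 3, 4\}$ by hypothesis.

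Next, for each $0 < j < N$, apply Theorem \ref{thrm:GK-tri} to choose an auxiliary trisection $\T_j$ of the closed $4$-manifold $h^{-1}(r_j)$; set $\T_0 := \T_X$ and $\T_N := \T_Z$. Then for each $j = 1, \dots, N$, invoke whichever of Propositions \ref{prop:index1}, \ref{prop:index2}, \ref{prop:index3}, \ref{prop:index4} matches the index of the critical point in $W_j$ to obtain a trisection $\M_j$ of $W_j$ strongly compatible with $\T_{j-1}$ on $h^{-1}(r_{j-1})$ and with $\T_j$ on $h^{-1}(r_j)$. Iteratively glue by Lemma \ref{gluinglemma}: at each interface level set, the two neighboring trisections induce the same $\T_j$ and are strongly compatible from both sides, so the hypotheses of the gluing lemma are met. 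The result is a trisection $\M$ of $Y$, and since strong compatibility is a boundary condition verified at both ends of the glued chain, $\M$ is strongly compatible with $Y$. By construction, $\M$ restricts to $\T_X$ on $X$ and to $\T_Z$ on $Z$.

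The substantive geometric content has already been packaged into the four elementary propositions, each of which internally reconciles the trisection produced by its Morse-coordinate local model with a prescribed trisection on the upper boundary by appealing to Proposition \ref{prop:change-trisection}. Consequently, the main obstacle at this stage is not geometric but combinatorial bookkeeping: verifying that the small perturbation of $h$ preserves the hypothesis that each $W_j$ contains exactly one critical point of index in $\{1,2,3,4\}$, checking that the auxiliary trisections on the intermediate level sets can indeed be chosen freely (which is why Theorem \ref{thrm:GK-tri} is used), and confirming that the iterated gluing of strongly compatible trisections remains a trisection at every step --- all of which are immediate from the results already established in this section.
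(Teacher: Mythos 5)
Your proof is correct and takes essentially the same route as the paper, which simply says ``the theorem follows via induction on Propositions~\ref{prop:index1},~\ref{prop:index2},~\ref{prop:index3}, and~\ref{prop:index4}.'' You have filled in exactly the intended details: perturb so the critical values are distinct, slice along regular level sets into elementary cobordisms, choose auxiliary trisections on the intermediate slices, invoke the index-$i$ proposition on each slice, and assemble via Lemma~\ref{gluinglemma}, using that strong compatibility is preserved by that lemma.

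One small stylistic observation: you appeal to Theorem~\ref{thrm:GK-tri} to pre-select the intermediate trisections $\T_j$, but this is not strictly necessary --- since each Proposition~\ref{prop:index1}--\ref{prop:index4} already accepts an \emph{arbitrary} prescribed trisection on its top boundary (internally reconciling via Proposition~\ref{prop:change-trisection}), you could instead take $\T_j$ to be whatever trisection the $j$-th elementary cobordism hands you and only prescribe $\T_Z$ on the final slice. Either organization is fine; yours makes the inputs to each proposition explicit at the cost of one extra appeal to existence of trisections.
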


\begin{proof}
The theorem follows via induction on Propositions~\ref{prop:index1},~\ref{prop:index2},~\ref{prop:index3}, and~\ref{prop:index4}.
%
%By induction on Lemmas~\ref{1handle} and~\ref{2handle}, there exists a trisection $\M'=(Y'_1,Y'_2,Y'_3)$ of $h^{-1}[0,5/2]$ so that $Y'_i\cap h^{-1}(0)=X_i$. By considering the Morse function $\bar{h}(t)=h(5-t)$, induction on Lemmas~\ref{1handle} and~\ref{2handle} similarly give a trisection $\M''=(Y''_1,Y''_2,Y''_3)$ of $h^{-1}[5/2,5]$ so that $Y''_i\cap h^{-1}(5)=Z_i$ and $Y''_i\cap h^{-1}(5/2)=Y'_i\cap h^{-1}(5/2)$. Let $Y_i:=Y'_i\cup Y''_i$. Then $\M=(Y_1, Y_2, Y_3)$ is the desired trisection.
\end{proof}

The following corollary follows immediately from Theorem~\ref{maintheorem} and the definition of strongly compatible.

\begin{corollary}
Let $Y$ be a cobordism between smooth, closed, connected $4$-manifolds $A$ and $B$. Let $W$ be a cobordism between closed, connected $4$-manifolds $B$ and $C$. Fix trisections $\T_A,\T_B,\T_C$ of $M,N,$ and $X$ respectively. Let $\M_Y, \M_W$ be trisections of $Y$ and $W$ as in Theorem~\ref{maintheorem} so that $\M_Y$ restricts to $\T_A$ on $A$, $\M_W$ restricts to $\T_C$ on $C$, and both $\M_Y$ and $\M_W$ restrict to $\T_B$ on $B$. Moreover, $\M_Y$ and $\M_W$ are strongly compatible with $Y$ and $W$, respectively. Then we can glue $\M_Y$ and $\M_W$ to obtain a trisection $\M$ of the cobordism $Y\cup_B W$ from $A$ to $C$.

Note here that the identification of $B\subset \boundary Y$ and $B\subset\boundary W$ is induced by $\T_B$.
\end{corollary}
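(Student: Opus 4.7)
The plan is to invoke Lemma~\ref{gluinglemma} directly; essentially all of the work has already been done, and the corollary is a bookkeeping exercise verifying that its hypotheses are satisfied.

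First, I would set up the cobordism decompositions in the notation of Lemma~\ref{gluinglemma}. Write $\partial Y = \partial Y_+ \coprod (-\partial Y_-)$ with $\partial Y_+ = B$ and $\partial Y_- = A$, and similarly $\partial W_- = B$ and $\partial W_+ = C$. The diffeomorphism $\phi : \partial Y_+ \to \partial W_-$ is the given identification of the two copies of $B$; by hypothesis this identification is the one induced by $\T_B$, so $\phi$ pulls back the restriction $\M_W|_{\partial W_-}$ to the restriction $\M_Y|_{\partial Y_+}$, as both equal $\T_B$ sector-by-sector.

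Next I would confirm the strong compatibility inputs. By assumption $\M_Y$ (resp.\ $\M_W$) arises as in Theorem~\ref{maintheorem}, so it is strongly compatible with $Y$ (resp.\ $W$). In particular it is strongly compatible with its restriction to the incoming and outgoing $4$-manifolds, which is what Lemma~\ref{gluinglemma} needs on the glued boundary. All the remaining hypotheses of the gluing lemma — that $Y$ and $W$ are smooth oriented $5$-dimensional cobordisms with trisected boundary components — are part of the data of the corollary.

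Applying Lemma~\ref{gluinglemma} then yields that
\[
\M = (Y_1 \cup_\phi W_1,\; Y_2 \cup_\phi W_2,\; Y_3 \cup_\phi W_3)
\]
is a trisection of $Y \cup_B W$ and is strongly compatible with $Y \cup_B W$ as a cobordism from $A$ to $C$. Its restriction to $A$ is $\M_Y|_A = \T_A$ and its restriction to $C$ is $\M_W|_C = \T_C$, so $\M$ has exactly the boundary behavior the corollary demands.

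There is no real obstacle here: the only point requiring a sentence of justification is that the three separate pieces of data (the diffeomorphism of $B$, the induced trisection $\T_B$, and the strong compatibility of $\M_Y$ and $\M_W$ at $B$) are precisely the trio that Lemma~\ref{gluinglemma} was designed to consume. Once that is noted, the conclusion is immediate.
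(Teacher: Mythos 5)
Your proposal is correct and matches the paper's intended argument: the paper states only that the corollary ``follows immediately from Theorem~\ref{maintheorem} and the definition of strongly compatible,'' and your write-up simply makes explicit that the gluing machinery doing the work is Lemma~\ref{gluinglemma}, whose hypotheses are supplied by Theorem~\ref{maintheorem}. The bookkeeping you spell out (identifying $\partial Y_+ = B = \partial W_-$, checking that $\phi$ matches the restricted trisections, and noting the strong compatibility at $B$) is exactly what the paper treats as immediate.
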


%%%%%%%%%%%%%%%%%%%%%%%%%%%%%%%%%%%%%%%%%%%%%%%%%%%%%%%
\bibliographystyle{alpha}
\nocite{*}
\bibliography{References}
%%%%%%%%%%%%%%%%%%%%%%%%%%%%%%%%%%%%%%%%%%%%%%%%%%%%%%%

\end{document}